\newtheorem{theorem}{Theorem}
\newtheorem{remark}{Remark}
\newtheorem{corollary}{Corollary}
\newtheorem{lemma}{Lemma}
\newtheorem{example}{Example}
\numberwithin{equation}{section} \numberwithin{theorem}{section}
\numberwithin{lemma}{section} \numberwithin{remark}{section}
\numberwithin{table}{section} \numberwithin{corollary}{section}
\numberwithin{example}{section} \numberwithin{conjecture}{section}
\numberwithin{assumption}{section}
\newcommand{\ds}{\displaystyle}
\newcommand{\hdg}{h^{\delta}\,\Gamma(3-\delta)}
\begin{document}

\title{A finite difference method for a two-point boundary value problem with a Caputo fractional derivative\thanks{This research was partly supported by the Instituto Universitario de Matem\'aticas y Aplicaciones (IUMA), the  Ministerio de Educaci\'{o}n, Cultura y Deporte (Programa Nacional de Movilidad de Recursos Humanos del Plan Nacional de I+D+i 2008-2011), project MEC/FEDER MTM 2010-16917 and the Diputaci\'{o}n General de Arag\'{o}n.}}

\author{
Martin Stynes\thanks{Corresponding author.  Email: m.stynes@ucc.ie}\\[2pt]
Department of Mathematics, National University of Ireland, Cork, Ireland
\\[6pt]
and\\[6pt]
Jos\'e Luis Gracia\thanks{Email: jlgracia@unizar.es}\\[2pt]
IUMA and Department of Applied Mathematics, University of Zaragoza, Spain
}

\maketitle

\begin{abstract}
{A two-point boundary value problem whose highest-order term is a Caputo fractional derivative of order $\delta \in (1,2)$ is considered. Al-Refai's comparison principle is improved and modified to fit our problem. Sharp a priori bounds on derivatives of the solution $u$ of the boundary value problem are established, showing that $u''(x)$ may be unbounded at the interval endpoint $x=0$. These bounds and a discrete comparison principle are used to prove pointwise convergence of a finite difference method for the problem, where the convective term is discretized using simple upwinding to yield stability on coarse meshes for all values of $\delta$. Numerical results are presented to illustrate the performance of the method.}
Fractional differential equation; Caputo fractional derivative; boundary value problem; derivative bounds; finite difference method; convergence proof.
\end{abstract}

\section{Introduction}\label{sec:intro}
Fractional derivatives are used in an ever-widening range of models of physical processes, and as a consequence the last decade has seen an explosive growth in the number of numerical analysis papers examining differential equations with fractional-order derivatives \citep[see the references in][]{MKM11}. While the analysis of some of these papers \citep[e.g.,][]{MM12,PT12} takes account of the possibly singular behaviour of solutions near some domain boundaries, most fractional-derivative numerical analysis papers work only with very special cases by assuming (explicitly or implicitly) that the solutions they approximate are smooth on the closure of the domain where the problem is posed. In particular, we know of no paper where a finite difference method for a fractional-derivative boundary value problem posed on a bounded domain is analysed rigorously under reasonably general and realistic hypotheses on the behaviour of the solution near the boundaries of that domain. In the present paper we provide the first such rigorous analysis.

Even though we deal with the one-dimensional case---a two-point boundary value problem---the analysis is nevertheless lengthy and requires the development of various techniques that do not appear in the context of ``classical" problems (i.e., problems with integer-order derivatives).

Let $n\in\mathbb{R}$ satisfy $m-1<n<m$ for some positive integer $m$. The Riemann-Liouville fractional derivative~$D^n$ is defined by
$$
D^n g(x) = \left( \frac{d}{dx} \right)^m
    \left[\frac1{\Gamma(m-n)}\int_{t=0}^x (x-t)^{m-n-1}g(t)\,dt \right]
    \quad\text{for }\ 0< x \le 1
$$
for all functions $g$ such that $D^n g(x)$ exists.
Our interest centres on the Caputo fractional derivative~$D_*^n$, which is defined \citep[Definition 3.2]{Diet10} in terms of~$D^n$ by
\begin{equation}\label{CaputoDefn}
D_*^n g = D^n[g - T_{m-1}[g;0]],
\end{equation}
where $T_{m-1}[g;0]$ denotes the Taylor polynomial of degree $m-1$ of the function~$g$ expanded around $x=0$. If $g \in C^{m-1}[0,1]$ and $g^{(m-1)}$ is absolutely continuous on $[0,1]$, then \citep[Theorem~3.1]{Diet10} one also has the equivalent formulation
\begin{equation}\label{CaputoEquiv}
D_*^n g(x) :=  \frac1{\Gamma(m-n)} \int_{t=0}^x (x-t)^{m-n-1} g^{(m)}(t)\, dt
    \quad\text{for }\ 0<x \le 1.
\end{equation}
Our work relies heavily on \citet{PT12}, who use the definition~\eqref{CaputoDefn} of $D_*^n$.

Since the integrals  in~$D^n g(x)$ and $D_\ast^n g(x)$ are associated in a special way with the point $x=0$, many authors write instead~$D_0^n g(x)$ and $D_{\ast\,0}^n g(x)$, but for simplicity of notation we omit the extra subscript~$0$.

Let the parameter $\delta$ satisfy $1 < \delta <2$.
Throughout the paper we consider the two-point boundary value problem
\begin{subequations}\label{prob}
\begin{align}
-D_\ast^\delta &u(x) + b(x)u'(x) + c(x)u(x) = f(x) \ \text{ for }x\in(0,1), \label{proba} \\
&u(0)-\alpha_0u'(0)= \gamma_0,\quad u(1)+\alpha_1u'(1)= \gamma_1, \label{probb}
\end{align}
\end{subequations}
where the constants $\alpha_0, \alpha_1, \gamma_0, \gamma_1$  and the functions $b,c$ and $f$ are given. We assume that $\alpha_1 \ge 0$
and
\begin{equation}\label{AlRcondition}
\alpha_0 \ge \frac1{\delta -1}\,.
\end{equation}
The condition \eqref{AlRcondition} comes from \citet{AlR12a}; it will be used in Sections~\ref{sec:maxprin} and \ref{sec:discrete} below to ensure that \eqref{prob} and its discretization each satisfy a suitable comparison principle. For the moment we assume that $b, c, f \in C[0,1]$; further hypotheses will be placed later on the regularity of these functions.

The problem \eqref{prob} is discussed by \citet{AlR12a} and is a particular case of the wide class of boundary value problems considered in~\citet{PT12}. It is a  steady-state version of the time-dependent problems discussed in~\citet{SD11,SL0405,ZLZ10} and~\citet{JT12}---who describe some advantages of the Caputo fractional derivative over the Riemann-Liouville fractional derivative.

Our paper is structured as follows. Section~\ref{sec:maxprin} obtains a comparison principle for the differential operator and boundary operators in~\eqref{prob}. In Section~\ref{sec:derivatives} existence and uniqueness of a solution to~\eqref{prob} is shown, and sharp pointwise bounds on the integer-order derivatives of this solution are derived. The finite difference discretization of \eqref{prob} on a uniform mesh of width $h$ is described and analysed in Section~\ref{sec:discrete}, and it is proved to be $O(h^{\delta-1})$ convergent at the mesh points. Two numerical examples are presented in Section~\ref{sec:numerical}.

\emph{Notation.} We use the standard notation $C^k(I)$ to denote the space of real-valued functions whose derivatives up to order $k$ are continuous on an interval $I$, and write $C(I)$ for $C^0(I)$.  For each $g\in C[0,1]$, set $\|g\|_\infty = \max_{x\in[0,1]} |g(x)|$. As in \citet{Diet10}, for each positive integer $m$ define
$$
A^m[0,1] = \{ g \in C^{m-1}[0,1]: g^{(m-1)}\text{ is absolutely continuous on } [0,1]  \}.
$$

In several inequalities $C$ denotes a generic constant that  depends on the data of the boundary value problem~\eqref{prob} but is independent of any mesh used to solve~\eqref{prob} numerically; note that $C$ can take different values in different places.

\section{Comparison principle}\label{sec:maxprin}

We begin with a basic result.
\begin{lemma}\label{lem:AlRformula}
\citep[Theorem 2.1]{AlR12b}
Let $g\in C^2[0,1]$ achieve a global minimum at $x_0\in (0,1)$. Then
\begin{equation}\label{AlRformula}
D_\ast^\delta g(x_0) \ge \frac{x_0^{-\delta}}{\Gamma(2-\delta)} \left\{(\delta-1) \left[g(0)-g(x_0)\right] - x_0 g'(0)\right\}.
\end{equation}
\end{lemma}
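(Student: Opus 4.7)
The plan is to start from the equivalent representation \eqref{CaputoEquiv} of the Caputo derivative, which applies here because $g\in C^2[0,1]\subset A^2[0,1]$, and then to integrate by parts twice in such a way that the hypothesis on $x_0$ delivers a sign on the remainder. Concretely, I would write
\[
\Gamma(2-\delta)\,D_\ast^\delta g(x_0) = \int_0^{x_0} (x_0-t)^{1-\delta} g''(t)\,dt
\]
and first integrate by parts with $u=(x_0-t)^{1-\delta}$, $dv=g''(t)\,dt$. Since $2-\delta>0$ and $g'$ is continuous with $g'(x_0)=0$ (interior extremum), the boundary contribution at $t=x_0$ vanishes, leaving
\[
\int_0^{x_0}(x_0-t)^{1-\delta}g''(t)\,dt = -x_0^{1-\delta}g'(0) - (\delta-1)\int_0^{x_0}(x_0-t)^{-\delta}g'(t)\,dt.
\]

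For the second integration by parts the key trick is to replace $g'(t)$ by the derivative of $h(t):=g(t)-g(x_0)$, which is permissible and vital because the global-minimum hypothesis gives $h\ge 0$ on $[0,1]$. Taking $u=(x_0-t)^{-\delta}$, $v=h(t)$, the boundary term at $t=x_0$ again vanishes (a Taylor expansion gives $h(t)=O((x_0-t)^2)$ while $(x_0-t)^{-\delta}$ blows up only like $(x_0-t)^{-\delta}$ with $\delta<2$), and the term at $t=0$ produces $-x_0^{-\delta}[g(0)-g(x_0)]$. Collecting everything,
\[
\int_0^{x_0}(x_0-t)^{1-\delta}g''(t)\,dt = -x_0^{1-\delta}g'(0) + (\delta-1)x_0^{-\delta}[g(0)-g(x_0)] + \delta(\delta-1)\int_0^{x_0}(x_0-t)^{-\delta-1}h(t)\,dt.
\]

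Since $\delta(\delta-1)>0$, $(x_0-t)^{-\delta-1}>0$, and $h\ge 0$, the last integral is nonnegative and can be discarded. Dividing by $\Gamma(2-\delta)>0$ yields exactly \eqref{AlRformula}.

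I expect the only real obstacle to be the careful justification of the two vanishing endpoint terms at $t=x_0$: the integrands are improper at that endpoint, so one must either treat the integrals as limits over $[0,x_0-\varepsilon]$ and let $\varepsilon\downarrow 0$, using the $C^2$ smoothness of $g$ together with $g'(x_0)=0$ to kill the boundary contributions, or appeal to standard results on Abel-type integrals. Everything else is routine manipulation, and the sign of the leftover integral follows directly from the global-minimum hypothesis on $g$ at $x_0$.
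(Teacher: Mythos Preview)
Your argument is correct. The two integrations by parts are exactly the right idea, the subtraction of $g(x_0)$ to form $h(t)=g(t)-g(x_0)$ is the crucial device that makes the remainder integral nonnegative, and your handling of the endpoint $t=x_0$ is sound: with $g\in C^2[0,1]$ and $g'(x_0)=0$, Taylor's theorem gives $g'(t)=O(x_0-t)$ and $h(t)=O((x_0-t)^2)$, so both boundary contributions and the final improper integral are well-defined because $2-\delta>0$.

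As for comparison with the paper: the paper does not supply its own proof of this lemma but simply cites \citet[Theorem~2.1]{AlR12b}. Your argument is in fact the standard one, and it coincides with Al-Refai's original proof. The only additional remark the paper makes is that, on inspecting that argument, the hypothesis $g\in C^2[0,1]$ can be relaxed to $g\in C^2(0,1]$ with $|g''(x)|\le Cx^{-\theta}$ for some $\theta\in(0,1)$; you can see directly from your computation that this weaker assumption still suffices, since the only place $g''$ enters is through the finiteness of the original Caputo integral and the Taylor estimate $h(t)=O((x_0-t)^2)$ near the interior point $x_0$.
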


A careful inspection of the argument used to prove this lemma in \citet{AlR12b} shows that it remains valid under the weaker regularity hypothesis that
$$
\text{(Regularity Hypothesis 1)}\qquad
g\in  C^2(0,1] \text{ and } |g''(x)| \le Cx^{-\theta} \text{ for } 0<x\le 1,
$$
where~$C$ and~$\theta$ are some fixed constants with $0<\theta<1$.  Observe that any function $g$ that satisfies Regularity Hypothesis 1 can be extended to a function (which we also call $g$) lying in $C^1[0,1] \cap A^2[0,1]$. We shall see in Section~\ref{sec:derivatives} that the solution of the boundary value problem~\eqref{prob} satisfies Regularity Hypothesis 1 but does not in general lie in~$C^2[0,1]$.

Lemma~\ref{lem:AlRformula} is the key tool needed to prove the following comparison principle.
\begin{lemma}\label{lem:AlRpositivity1}
\citep[Lemma~3.3]{AlR12a}
Let $g\in C^2[0,1]$. Let $b,c \in C[0,1]$ with $c(x)>0$ for all $x\in(0,1)$. Assume that $g$ satisfies the inequalities
\begin{subequations}\label{AlR}
\begin{align}
&-D_\ast^\delta g+bg' + cg \ge 0\ \text{ on }(0,1), \label{AlR1} \\
g(0)&-\alpha_0 g'(0) \ge 0 \ \text{ and } \ g(1)+\alpha_1 g'(1) \ge 0, \label{AlR2}
\end{align}
\end{subequations}
where $\alpha_0$ satisfies~\eqref{AlRcondition} and $\alpha_1\ge 0$.
Then $g \ge 0$ on $[0,1]$.
\end{lemma}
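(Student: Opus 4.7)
The plan is to argue by contradiction: assume $\min_{[0,1]} g < 0$, attained at some $x_0 \in [0,1]$, and derive an inconsistency with the hypotheses. The strategy naturally splits into excluding $x_0\in\{0,1\}$ using the boundary conditions, and excluding $x_0\in(0,1)$ by combining Lemma~\ref{lem:AlRformula} with the differential inequality~\eqref{AlR1}; the condition~\eqref{AlRcondition} will enter decisively only in one subcase of the interior analysis.

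First I would rule out the endpoints. If $x_0=0$, then the one-sided derivative must satisfy $g'(0)\ge 0$ at a left-endpoint minimum, and~\eqref{AlR2} gives $g(0)\ge\alpha_0 g'(0)\ge 0$, contradicting $g(x_0)<0$. Symmetrically, if $x_0=1$, then $g'(1)\le 0$ and~\eqref{AlR2} yields $g(1)\ge-\alpha_1 g'(1)\ge 0$, again a contradiction. Hence $x_0\in(0,1)$ and therefore $g'(x_0)=0$. Inserting this into~\eqref{AlR1} and using $c(x_0)>0$ together with $g(x_0)<0$ produces the upper bound $D_\ast^\delta g(x_0)\le c(x_0)g(x_0)<0$.

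To reach a contradiction I would now apply Lemma~\ref{lem:AlRformula} and show that the bracket on the right of~\eqref{AlRformula} is nonnegative, i.e.
$$
(\delta-1)\bigl[g(0)-g(x_0)\bigr] - x_0 g'(0) \ge 0. \qquad (\ast)
$$
If $g'(0)\le 0$, then $-x_0 g'(0)\ge 0$ and $g(0)-g(x_0)\ge 0$ (since $x_0$ is a global minimizer), so $(\ast)$ is immediate. The delicate subcase is $g'(0)>0$: here I would use~\eqref{AlRcondition} and the left boundary inequality in~\eqref{AlR2} to get $(\delta-1)g(0)\ge (\delta-1)\alpha_0 g'(0)\ge g'(0)$, which, inserted into the left side of $(\ast)$, yields
$$
(\delta-1)\bigl[g(0)-g(x_0)\bigr] - x_0 g'(0) \ge (1-x_0)g'(0) - (\delta-1)g(x_0) > 0
$$
thanks to $x_0<1$, $g'(0)>0$, $\delta>1$ and $g(x_0)<0$. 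Thus $(\ast)$ holds in both subcases, so Lemma~\ref{lem:AlRformula} gives $D_\ast^\delta g(x_0)\ge 0$, contradicting the upper bound above.

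The main obstacle is the second subcase, where one must control the unsigned combination $(\delta-1)g(0)-x_0 g'(0)$; this is precisely where the Al-Refai bound~\eqref{AlRcondition} on $\alpha_0$ is needed, and it is the only place in the argument where $\delta\in(1,2)$ plays any role beyond the applicability of Lemma~\ref{lem:AlRformula}. Everything else is organisational: separating endpoint from interior minima and reading off the signs from the boundary conditions.
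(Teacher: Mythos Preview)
Your argument is correct and follows exactly the route one infers from the paper, which does not actually supply a proof of this lemma but cites \citet[Lemma~3.3]{AlR12a} and then, in the discussion leading to Theorem~\ref{th:AlRpositivity}, sketches the structure: use Lemma~\ref{lem:AlRformula} at an interior minimum, treating separately the sign of $g'(0)$ (the paper singles out the case $g'(0)<0$ as the easy one). Your handling of the subcase $g'(0)>0$, where the condition~\eqref{AlRcondition} is used to push $(\delta-1)g(0)\ge g'(0)$, is precisely the step the paper alludes to as the reason for that hypothesis.
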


Recalling our observation above that Lemma~\ref{lem:AlRformula} is still true when the hypothesis $g\in C^2[0,1]$ is replaced by Regularity Hypothesis 1, one sees quickly from the proof of \citet[Lemma~3.3]{AlR12a} that Lemma~\ref{lem:AlRpositivity1} remains valid when the assumption $g\in C^2[0,1]$ is replaced by Regularity Hypothesis 1. In fact, one can go further: Lemma~\ref{lem:AlRformula} shows immediately that when~$g'(0)<0$ one has~$D_\ast^\delta g(x_0) >0$ at the global minimum, and invoking this observation in the proof of \citet[Lemma~3.3]{AlR12a} and changing a few inequalities there from strict to weak or vice versa, the hypothesis $c>0$ can be weakened to $c \ge 0$. That is, one has the following more general version of Lemma~\ref{lem:AlRpositivity1}.

\begin{theorem}\label{th:AlRpositivity}
Let $g$ satisfy Regularity Hypothesis 1. Let $b,c \in C[0,1]$ with $c(x) \ge 0$ for all $x\in(0,1)$. Assume that $g$ satisfies the inequalities~\eqref{AlR}, where $\alpha_0$ satisfies~\eqref{AlRcondition} and $\alpha_1\ge 0$.
Then $g \ge 0$ on $[0,1]$.
\end{theorem}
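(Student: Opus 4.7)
The plan is to mimic the contradiction argument of Al-Refai's Lemma 3.3, but weaving in the two observations that the authors have already flagged: first, that Lemma 2.1 holds under Regularity Hypothesis 1 once $g$ is extended to $C^1[0,1]\cap A^2[0,1]$, so the pointwise inequality \eqref{AlRformula} is available; second, that the right-hand side of \eqref{AlRformula} is actually strictly positive at a negative global minimum, which lets us drop the strict sign requirement on $c$.

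I would start by assuming for contradiction that $\min_{[0,1]} g < 0$ and picking $x_0 \in [0,1]$ at which this negative minimum is attained. First I would rule out the two endpoints. If $x_0 = 0$, then the one-sided minimum condition gives $g'(0) \ge 0$, so the boundary inequality $g(0) - \alpha_0 g'(0) \ge 0$ combined with $\alpha_0 > 0$ from \eqref{AlRcondition} forces $g(0) \ge 0$, contradicting $g(x_0) < 0$. If $x_0 = 1$, the symmetric argument with $g'(1) \le 0$ and $\alpha_1 \ge 0$ produces $g(1) \ge 0$, another contradiction. So $x_0 \in (0,1)$, and consequently $g'(x_0) = 0$.

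Now the main step: applying the extended Lemma 2.1 at $x_0$ and combining with the differential inequality \eqref{AlR1} evaluated at $x_0$. Since $g'(x_0)=0$ and $c(x_0) \ge 0$ with $g(x_0) < 0$, \eqref{AlR1} yields $D_\ast^\delta g(x_0) \le c(x_0) g(x_0) \le 0$. Meanwhile \eqref{AlRformula} gives
\[
D_\ast^\delta g(x_0) \;\ge\; \frac{x_0^{-\delta}}{\Gamma(2-\delta)}\Bigl\{(\delta-1)[g(0)-g(x_0)] - x_0 g'(0)\Bigr\}.
\]
The plan is to show the braced quantity is strictly positive by splitting on the sign of $g'(0)$. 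If $g'(0) < 0$, then $(\delta-1)[g(0)-g(x_0)] \ge 0$ (global minimum) and $-x_0 g'(0) > 0$, so the bracket is strictly positive; this is precisely the improvement that allows $c \ge 0$ rather than $c>0$. If $g'(0) \ge 0$, the boundary condition gives $g(0) \ge \alpha_0 g'(0)$, and using \eqref{AlRcondition} in the form $(\delta-1)\alpha_0 \ge 1$ together with $x_0 < 1$ bounds the bracket below by $(\delta-1)|g(x_0)| + [(\delta-1)\alpha_0 - x_0] g'(0) > 0$. Either way $D_\ast^\delta g(x_0) > 0$, contradicting the earlier bound, and the theorem follows.

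The only real obstacle is bookkeeping: one has to verify that every strict-versus-weak inequality still closes up correctly after replacing $c > 0$ with $c \ge 0$, and one has to trust (or briefly re-derive) that the proof of Lemma 2.1 in \citet{AlR12b} survives the relaxation to Regularity Hypothesis 1 — the authors have flagged both of these but neither requires new machinery, since $|g''(x)| \le Cx^{-\theta}$ with $\theta<1$ keeps the integrals defining $D_\ast^\delta g$ absolutely convergent, and the extension of $g$ to $C^1[0,1] \cap A^2[0,1]$ legitimises the use of \eqref{CaputoEquiv}.
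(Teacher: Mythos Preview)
Your proposal is correct and follows precisely the approach the paper outlines in the paragraph preceding the theorem: you carry out Al-Refai's contradiction argument, rule out the endpoints using the boundary conequalities, and then split on the sign of $g'(0)$ to show the right-hand side of \eqref{AlRformula} is strictly positive at a negative interior minimum---the case $g'(0)<0$ being exactly the refinement that lets $c\ge 0$ replace $c>0$. The paper gives no formal proof beyond this sketch, and your write-up is a faithful expansion of it.
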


The next example shows that, for our Caputo differential operator, one does not have a comparison principle for the simplest case of Dirichlet boundary conditions, unlike the situation for classical second-order boundary value problems. Thus one cannot permit $\alpha_0=0$ in Theorem~\ref{th:AlRpositivity}.

\begin{example}\label{exa:maxprincounterexample}
Take $\delta =1.2$. From \citet[Appendix B]{Diet10} we have
$$
D^{1.2}_\ast x^2 = \frac{\Gamma(3)}{\Gamma(1.8)}\, x^{0.8}
\ \text{ and }\
D^{1.2}_\ast x^3 = \frac{\Gamma(4)}{\Gamma(2.8)}\, x^{1.8}
    =\frac{3\Gamma(3)}{(1.8)\Gamma(1.8)}\, x^{1.8}
    \le \frac{(1.67)\Gamma(3)}{\Gamma(1.8)}\, x^{0.8}
$$
for $0 < x < 1$. Also $D^{1.2}_\ast x = 0$. Set $g(x)= x^3-1.67 x^2 + 0.67x$. Then
\begin{equation}\label{Dirichlet}
-D^{1.2}_\ast g(x) \ge 0 \ \text{ for }\ 0 < x <1, \quad g(0) = g(1)=0,
\end{equation}
but $g(0.8) = 0.512 - 1.0688 + 0.536 <0$, so the Dirichlet boundary conditions in~\eqref{Dirichlet} do not justify a comparison principle for $-D^{1.2}_\ast$ on $[0,1]$.

In this example one has $g(0) =0$ and $g'(0)>0$, so the condition $g(0)-\alpha_0 g'(0) \ge 0$ of~\eqref{AlR2} is not satisfied.
\end{example}

\section{A priori bounds on derivatives of the solution}\label{sec:derivatives}
The only source we know for bounds on (certain) derivatives of the solution of \eqref{prob} is~\citet{PT12}, who prove a very general existence result for two-point boundary value problems with differential operators involving fractional-order derivatives. Their analysis is based on \citet{BPV01}.

\emph{Notation.} For integer $q \ge 1$ and $\nu \in (-\infty, 1)$, define $C^{q,\nu}(0,1]$ to be the set of continuous functions $y: [0,1]\to\mathbb{R}$ that are $q$ times continuously differentiable in~$(0,1]$ and satisfy the bounds
\begin{align}
|y^{(i)}(x)| \le
    \begin{cases}
    C &\text{if } i < 1-\nu, \\
    C(1+|\ln x|) &\text{if } i=1-\nu, \\
    Cx^{1-\nu-i} &\text{if } i > 1-\nu, \\
    \end{cases}
\end{align}
for $0<x \le 1$ and $i=1,2,\dots,q$, where $C$ is some constant. Observe that as~$\nu$ increases, the smoothness of functions in~$C^{q,\nu}(0,1]$ decreases. Clearly $C^q[0,1] \subset C^{q,\nu}(0,1] \subset C^{m,\mu}(0,1] \subset C[0,1]$ for $q\ge m \ge 1$ and $\nu \le \mu < 1$.

For our problem \eqref{prob}, the Pedas and Tamme result is as follows.

\begin{theorem}\label{th:PTresult}\cite[Theorem 2.1]{PT12}
Let $b,c, f \in C^{q,\mu}(0,1]$ for some integer $q \ge 1$ and $\mu \in (-\infty, 1)$.  Set $\nu = \max\{\mu, 2-\delta\}$. Let $S$ denote the set of functions $w$ defined on $[0,1]$ for which  $D_\ast^\delta w \in C^{q, \nu}(0,1]$. Assume that the problem~\eqref{prob} with $f\equiv 0,\ \gamma_0=0$ and $\gamma_1=0$ has
in $S$ only the trivial solution $w\equiv 0$.
Then \eqref{prob} has a unique solution $u \in S$; furthermore, $u\in C^1[0,1]$.
\end{theorem}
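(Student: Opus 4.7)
The plan is to convert \eqref{prob} into a second-kind integral equation on the weighted space $C^{q,\nu}(0,1]$ and then invoke the Fredholm alternative. First, set $g = D_\ast^\delta u$. Since $m=2$ in \eqref{CaputoDefn}, the Caputo derivative can be inverted by Riemann--Liouville fractional integration to give
$$
u(x) = u(0) + u'(0)\,x + (I^\delta g)(x), \qquad u'(x) = u'(0) + (I^{\delta-1} g)(x),
$$
where $(I^\beta y)(x) := \frac{1}{\Gamma(\beta)}\int_0^x (x-t)^{\beta-1} y(t)\,dt$. The boundary conditions \eqref{probb} then form a $2\times 2$ linear system for $u(0)$ and $u'(0)$ in terms of $g,\gamma_0,\gamma_1$ whose determinant equals $1+\alpha_0+\alpha_1>0$ by \eqref{AlRcondition} and $\alpha_1\ge 0$, so $u(0)$ and $u'(0)$ are bounded linear functionals of $g$. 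Substituting the resulting expressions for $u$ and $u'$ into \eqref{proba} produces a second-kind equation
$$
g - Kg = \phi,
$$
where $\phi$ is an explicit function of $b,c,f,\gamma_0,\gamma_1$ and $K$ is a linear integral operator built from $b$, $c$ and the fractional integrals $I^\delta,I^{\delta-1}$.

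The next---and technically decisive---step is to show that $K$ is a compact operator on the Banach space $C^{q,\nu}(0,1]$ endowed with its natural weighted norm. The crucial ingredient is the mapping property that whenever $y\in C^{q,\mu}(0,1]$, both $I^\delta y$ and $I^{\delta-1} y$ lie in $C^{q,\nu}(0,1]$ for $\nu=\max\{\mu,2-\delta\}$; the threshold $2-\delta$ enters because the $i$-th derivative of $I^\delta y$ can exhibit an $x^{\delta-i}$-type boundary singularity at $x=0$, matching exactly the growth $x^{1-\nu-i}$ permitted when $\nu=2-\delta$. Pointwise multiplication by $b,c\in C^{q,\mu}(0,1]$ preserves $C^{q,\nu}(0,1]$ via Leibniz's rule, and compactness then follows from an Arzel\`a--Ascoli argument exploiting the smoothing action of $I^\delta,I^{\delta-1}$ together with equicontinuity of difference quotients on each compact subinterval of $(0,1]$. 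Verifying these mapping properties with the sharp singular behaviour at $x=0$ is what I expect to be the main technical obstacle.

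Once $K$ is known to be compact on $C^{q,\nu}(0,1]$, the Fredholm alternative applies. The assumption that the homogeneous problem has only the trivial solution in $S$ is precisely the statement $\ker(I-K)=\{0\}$, so $I-K$ is invertible and a unique $g=(I-K)^{-1}\phi\in C^{q,\nu}(0,1]$ exists; reversing the substitution yields a unique $u\in S$. Finally, to obtain $u\in C^1[0,1]$ I would use the representation $u'(x)=u'(0)+(I^{\delta-1}g)(x)$: since $g\in C[0,1]$ is bounded and $\delta>1$ the kernel $(x-t)^{\delta-2}$ is integrable on $(0,x)$, and a standard dominated convergence argument shows $I^{\delta-1}g$ is continuous on $[0,1]$ with value $0$ at $x=0$, so $u'$ is continuous on $[0,1]$.
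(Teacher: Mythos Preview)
The paper does not actually prove this theorem: it is stated as a citation of \cite[Theorem~2.1]{PT12}, with the surrounding text noting that the Pedas--Tamme analysis is in turn based on \cite{BPV01}. So there is no proof in the present paper to compare against.

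That said, your sketch is essentially the strategy used in \cite{PT12} and \cite{BPV01}: rewrite the problem as a second-kind integral equation for $g=D_\ast^\delta u$ via fractional integration, establish that the resulting operator $K$ is compact on the weighted space $C^{q,\nu}(0,1]$, and apply the Fredholm alternative. Your determinant calculation $1+\alpha_0+\alpha_1>0$ is exactly the content of Remark~\ref{rem:onlylinear} in the paper, which observes that the nondegeneracy condition on the boundary functionals required by \cite{PT12} is automatic here. Your identification of the compactness step as the main technical obstacle is accurate; in \cite{BPV01} this is handled by detailed mapping properties of the fractional integral operators on the $C^{q,\nu}$ scale, and the threshold $\nu=\max\{\mu,2-\delta\}$ arises for precisely the reason you give. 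The final $C^1[0,1]$ argument via continuity of $I^{\delta-1}g$ is also correct. In short, your proposal is a faithful outline of the cited proof, not an alternative to anything in the present paper.
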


\begin{remark}\label{rem:onlylinear}
In \citet[Theorem 2.1]{PT12} there is the additional assumption that the only linear polynomial $y(x)$ that satisfies the boundary conditions \eqref{probb} is $y\equiv 0$, but it is straightforward to check that this condition is implied by~$\alpha_1 \ge 0$ and \eqref{AlRcondition}.
\end{remark}

While Theorem~\ref{th:PTresult} bounds $u'$ and the integer-order derivatives of~$D_\ast^\delta u$, it gives no bound on the derivatives~$u^{(i)}$ for $i=2,3,\dots$, but these derivatives will be needed in the consistency analysis of our finite difference method. Thus we now deduce bounds on the integer-order derivatives of $u$ from  Theorem~\ref{th:PTresult}. Our elementary argument can be regarded as interpolating between the integer-order derivatives of~$D_\ast^\delta u$; it relies  only on the derivative bounds stated in Theorem~\ref{th:PTresult} and makes no use of the differential equation~\eqref{proba}.

We shall prove this bound on the integer-order derivatives in a general setting that is suited to fractional-derivative boundary value problems of arbitrary order---such as those considered in \citet{PT12}---since the general proof is essentially the same as the proof for~$D^\delta_\ast$.
At various places in our calculations we shall need the formula
\begin{equation}\label{diffintegral}
\frac{d}{dx}\left[\int_{s=0}^x (x-s)^{\theta_1} r(s)\,ds \right] = \int_{s=0}^x \theta_1(x-s)^{\theta_1-1} r(s)\,ds\quad \text{for }\ 0<x<1,
\end{equation}
when $|r(s)| \le Cs^{-\theta_2}$ for $0<s\le 1$ and the constants $\theta_1, \theta_2$ lie in $(0,1)$; one can justify~\eqref{diffintegral} from the results of~\citet{Ta01} or by writing
$$
\int_{s=0}^x (x-s)^{\theta_1} r(s)\,ds = \int_{s=0}^x  r(s) \int_{t=s}^x \theta_1(t-s)^{\theta_1-1}\,dt\,ds
    =  \int_{t=0}^x \int_{s=0}^t   \theta_1(t-s)^{\theta_1-1} r(s) \,ds\,dt
$$
then applying the fundamental theorem of calculus.

The first step is the following technical result.

\begin{lemma}\label{lem:technical}
Let $m$ be a positive integer and let $\sigma\in\mathbb{R}$ satisfy $m-1 < \sigma < m$.
Suppose that
$$
w(x) = \int_{s=0}^x (x-s)^{\sigma-m} \psi(s)\, ds \quad \text{for }\ 0<x<1,
$$
where $\psi\in C^1(0,1]$ with $|\psi(s)| + s|\psi'(s)| \le C_1s^{\sigma-m}$ for $0<s<1$ and some constant $C_1$. Then
\begin{align}
w'(x) &= \frac1{x} \int_{s=0}^x (x-s)^{\sigma-m} [s\psi'(s) + (\sigma-m+1)\psi(s)]\, ds
\intertext{and}
|w'(x)| &\le C_1 \beta(\sigma-m+1,\sigma-m+1) x^{2(\sigma-m)}\quad \text{for }\ 0<x<1, \label{wprimex}
\end{align}
where $\beta(\cdot, \cdot)$ is Euler's Beta function.
\end{lemma}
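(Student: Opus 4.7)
The natural approach is to move both singularities---the endpoint singularity of the kernel $(x-s)^{\sigma-m}$ at $s=x$ and the possible blow-up of $\psi$ at $s=0$---into a form where the $x$-dependence is separated out. I would start by applying the substitution $s=xt$, which gives
$$
w(x) = x^{\sigma-m+1}\int_0^1 (1-t)^{\sigma-m}\psi(xt)\,dt
\quad\text{for }\ 0<x<1.
$$
The remaining integral is over a fixed interval $[0,1]$ and the integrand is dominated there by $C_1 x^{\sigma-m}(1-t)^{\sigma-m}t^{\sigma-m}$, which is integrable since $\sigma-m\in(-1,0)$, so this rewriting is legitimate.

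Next I would differentiate this expression with respect to $x$. The prefactor contributes $(\sigma-m+1)x^{\sigma-m}\int_0^1(1-t)^{\sigma-m}\psi(xt)\,dt$, and the integral term, handled by differentiation under the integral sign, contributes $x^{\sigma-m+1}\int_0^1(1-t)^{\sigma-m}\,t\,\psi'(xt)\,dt$. The interchange of differentiation and integration is justified because the $x$-derivative of the integrand is bounded, using $s|\psi'(s)|\le C_1 s^{\sigma-m}$, by $C_1 x^{\sigma-m-1}(1-t)^{\sigma-m}t^{\sigma-m}$, which is integrable on $[0,1]$ and uniformly so for $x$ in any compact subinterval of $(0,1)$. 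Reverting $s=xt$ in each of the two resulting integrals combines them into
$$
w'(x) = \frac{1}{x}\int_0^x (x-s)^{\sigma-m}\bigl[s\psi'(s) + (\sigma-m+1)\psi(s)\bigr]\,ds,
$$
which is the first assertion of the lemma.

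For the pointwise bound I would exploit the fact that $\sigma-m+1<1$: then
$$
\bigl|s\psi'(s)+(\sigma-m+1)\psi(s)\bigr|
 \le s|\psi'(s)| + |\psi(s)| \le C_1 s^{\sigma-m}
$$
by hypothesis. Inserting this estimate into the formula for $w'$ and once more substituting $s=xt$ reduces the remaining integral to $x^{2(\sigma-m)+1}\int_0^1 (1-t)^{\sigma-m}t^{\sigma-m}\,dt$, which by the definition of the Beta function equals $x^{2(\sigma-m)+1}\beta(\sigma-m+1,\sigma-m+1)$; dividing by $x$ yields \eqref{wprimex}. The only delicate step is the justification of differentiating under the integral sign, since $\psi'(s)$ may blow up as rapidly as $s^{\sigma-m-1}$; the combined hypothesis $|\psi(s)|+s|\psi'(s)|\le C_1 s^{\sigma-m}$ is precisely strong enough to supply the integrable majorant that this step demands, and the rest of the proof is mechanical.
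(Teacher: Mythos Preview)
Your argument is correct and takes a genuinely different route from the paper. The paper does not change variables; instead it writes $xw(x)=\int_0^x[(x-s)^{\sigma-m+1}+(x-s)^{\sigma-m}s]\psi(s)\,ds$, integrates the second term by parts to raise the kernel exponent to $\sigma-m+1$, and only then differentiates by appealing to the pre-established formula~\eqref{diffintegral}, finally recovering $w'(x)=x^{-1}[(xw)'-w]$. Your substitution $s=xt$ freezes the domain of integration and pushes all $x$-dependence into the prefactor and into $\psi(xt)$, so that an ordinary dominated-convergence argument justifies differentiating under the integral; the key observation---that $t\psi'(xt)$ inherits the bound $C_1x^{\sigma-m-1}(1-t)^{\sigma-m}t^{\sigma-m}$ and hence has an integrable majorant locally uniform in~$x$---is exactly what is needed, and you identify it correctly. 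The paper's route avoids any appeal to differentiation under the integral by first smoothing the kernel via integration by parts, at the cost of an extra algebraic step; your route is shorter and more transparent, at the cost of having to justify the interchange explicitly. Both lead to the same formula for $w'$ and the same Beta-function bound.
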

\begin{proof}
For $0<x<1$,
\begin{align*}
xw(x) &=  \int_{s=0}^x [(x-s)^{\sigma-m+1}+(x-s)^{\sigma-m}s] \psi(s)\, ds \\
    &= \int_{s=0}^x \left\{(x-s)^{\sigma-m+1}\psi(s) + \frac{(x-s)^{\sigma-m+1}}{\sigma-m+1}
        [s\psi'(s)+\psi(s)]\right\}\, ds,
\end{align*}
after an integration by parts. Applying \eqref{diffintegral} one gets
\begin{align*}
\big(xw(x)\big)'
    &= \int_{s=0}^x (x-s)^{\sigma-m} [s\psi'(s)+(\sigma-m+2)\psi(s)]\, ds,
\intertext{and hence}
w'(x)  &=  \frac1{x} \left[ \big(xw(x)\big)'-w(x) \right]
    = \frac1{x} \int_{s=0}^x (x-s)^{\sigma-m} [s\psi'(s) + (\sigma-m+1)\psi(s)]\, ds,
\end{align*}
as desired. Furthermore, the hypotheses of the lemma imply that
$$
    |w'(x)| \le \frac{C_1}{x}\int_{s=0}^x (x-s)^{\sigma-m} s^{\sigma-m}\, ds
     = C_1 \beta(\sigma-m+1,\sigma-m+1)  x^{2(\sigma-m)},
$$
where the value of the integral is given by Euler's Beta function \citep[Theorem~D.6]{Diet10}.
\end{proof}

The essential property of the bound \eqref{wprimex} is that it takes the form $Cx^{2(\sigma-m)}$ with a constant $C$ that is independent of $x$.

Now we can proceed with the main result of this section.

\begin{theorem}\label{th:purediffbounds}
Let $m$ be a positive integer with $m-1 < \sigma < m$. Assume that $r \in C^{m-1}[0,1]$ and $D_\ast^\sigma r \in C^{q, m-\sigma}(0,1]$ for some integer~$q \ge 1$.
Then $r \in C^{q+m-1}(0,1]$  and for all $x\in (0,1]$ there exists a constant~$C$, which is independent of $x$, such that
\begin{equation}\label{riinterpbound}
|r^{(i)}(x)| \le
    \begin{cases}
    C &\text{if }\ i=0,1,\dots, m-1, \\
    Cx^{\sigma-i} &\text{if }\ i=m, m+1,\dots, q+m-1.
    \end{cases}
\end{equation}
\end{theorem}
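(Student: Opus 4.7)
The plan is to invert the Caputo derivative to obtain an integral representation for $r^{(m-1)}$ in terms of $g:=D_\ast^\sigma r$, and then to differentiate this representation iteratively, using Lemma~\ref{lem:technical} to handle the singular kernel at each step.

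By the standard inversion of~$D_\ast^\sigma$ (see, e.g., \citet{Diet10}) followed by $m-1$ applications of~\eqref{diffintegral},
$$
r^{(m-1)}(x) = r^{(m-1)}(0) + \frac{V(x)}{\Gamma(\sigma-m+1)},\qquad V(x):=\int_0^x(x-s)^{\sigma-m}g(s)\,ds.
$$
The bounds $|r^{(i)}(x)|\le C$ for $0\le i\le m-1$ are immediate from the hypothesis $r\in C^{m-1}[0,1]$, so what remains is to bound $V^{(j+1)}(x)$ for $j=0,\dots,q-1$.

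For the base case $j=0$, integration by parts in $V$ (valid since $\sigma-m+1>0$ annihilates the boundary term at $s=x$) and one application of~\eqref{diffintegral} give $V'(x)=g(0)\,x^{\sigma-m}+\int_0^x(x-s)^{\sigma-m}g'(s)\,ds$; the bound $|g'(s)|\le Cs^{\sigma-m}$ from $g\in C^{q,m-\sigma}(0,1]$, together with the Beta-function identity of Lemma~\ref{lem:technical}, yields $|V'(x)|\le Cx^{\sigma-m}$. For the inductive step I would introduce the operator $(L\phi)(s):=s\phi'(s)+(\sigma-m+1)\phi(s)$, so that Lemma~\ref{lem:technical} reads $\frac{d}{dx}\int_0^x(x-s)^{\sigma-m}\phi(s)\,ds=\frac{1}{x}\int_0^x(x-s)^{\sigma-m}(L\phi)(s)\,ds$. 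A short Leibniz-rule check shows that $L$ preserves the bound class $\mathcal A:=\{\phi:|\phi^{(l)}(s)|\le C_\phi\,s^{\sigma-m-l}\}$ (for $l$ up to the required order of differentiation), which contains $\phi_0:=g'$ by the definition of $C^{q,m-\sigma}(0,1]$. Induction on $j$, combining the product rule on the accumulating $x^{-k}$ prefactor with Lemma~\ref{lem:technical}, then shows
$$
V^{(j+1)}(x)=c_j\,g(0)\,x^{\sigma-m-j}+\frac{1}{x^j}\int_0^x(x-s)^{\sigma-m}\tilde\phi_j(s)\,ds
$$
with $\tilde\phi_j\in\mathcal A$. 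The Beta-function bound $Cx^{2(\sigma-m)+1-j}\le Cx^{\sigma-m-j}$ (the last inequality using $\sigma>m-1$) then gives $|V^{(j+1)}(x)|\le Cx^{\sigma-m-j}$, whence $|r^{(m+j)}(x)|\le Cx^{\sigma-(m+j)}$. Continuity of each $r^{(i)}$ on $(0,1]$ is transparent from these integral formulas.

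The main obstacle is the combinatorial verification that the iterate $\tilde\phi_j$ stays in $\mathcal A$ and that the decomposition of $V^{(j+1)}$ shown above is correct. Both reduce to bookkeeping with Leibniz's rule, but care is needed so that no $s$-singularity worse than $s^{\sigma-m-l}$ is picked up along the way. Once this verification is in place, Lemma~\ref{lem:technical} supplies essentially all the analytic work, the key slack being that the bound it produces is $x^{2(\sigma-m)+1-j}$, a genuine improvement over the $x^{\sigma-m-j}$ demanded by the theorem precisely because $\sigma>m-1$.
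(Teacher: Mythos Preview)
Your approach is essentially the same as the paper's: invert the Caputo derivative to obtain an integral representation involving $g:=D_\ast^\sigma r$, then differentiate iteratively using Lemma~\ref{lem:technical}. Your packaging via the operator $L\phi=s\phi'+(\sigma-m+1)\phi$ and the invariant class $\mathcal A$ is a tidy formalisation of what the paper does more informally (``continuing in this way''); the paper also writes out the step from $r^{(m)}$ to $r^{(m+1)}$ explicitly and then describes the pattern, whereas you set it up as a clean induction. Both arguments rest on the same slack, namely that the Beta-function bound $x^{2(\sigma-m)+1-j}$ dominates the required $x^{\sigma-m-j}$ because $\sigma>m-1$.

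There is, however, one genuine gap. When you write ``By the standard inversion of~$D_\ast^\sigma$ (see, e.g., \citet{Diet10})'', the result you are invoking (\citet[Corollary 3.9]{Diet10}) requires $r\in A^m[0,1]$, i.e.\ $r^{(m-1)}$ absolutely continuous on $[0,1]$. The hypotheses of the theorem give only $r\in C^{m-1}[0,1]$, so this must be established before the inversion formula can be quoted. The paper does this explicitly: setting $w=r-T_{m-1}[r;0]$, one integrates by parts $m-1$ times in the Riemann--Liouville definition to obtain $\int_0^x D^\sigma w\,ds = \frac{1}{\Gamma(m-\sigma)}\int_0^x(x-t)^{m-\sigma-1}w^{(m-1)}(t)\,dt$, recognises this as an Abel integral equation for $w^{(m-1)}$, solves it using that $D^\sigma w=D_\ast^\sigma r\in C[0,1]$, and reads off absolute continuity of $w^{(m-1)}$ from the resulting $L_1$ integrand. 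This argument is not long, but it is not a mere citation either; you should include it (or an equivalent justification) before asserting the representation for $r^{(m-1)}$.
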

\begin{proof}
First, $r\in C^{m-1}[0,1]$ implies~\eqref{riinterpbound} for $i=0,1, \dots, m-1$.

Next, we show that $r \in A^m[0,1]$. Set $w = r-T_{m-1}[r;0]$. Then $w\in C^{m-1}[0,1]$ and $0 = w(0) = w'(0) = \dots = w^{(m-1)}(0)$. By definition
\begin{align*}
D^\sigma w(x) &= \left( \frac{d}{dx} \right)^m
    \left[\frac1{\Gamma(m-\sigma)}\int_{t=0}^x (x-t)^{m-\sigma-1}w(t)\,dt \right] \\
    &= \left( \frac{d}{dx} \right)^m
    \left[\frac1{\Gamma(2m-\sigma-1)}\int_{t=0}^x (x-t)^{2m-\sigma-2}w^{(m-1)}(t)\,dt \right] \\
    &= \frac{d}{dx}
    \left[\frac1{\Gamma(m-\sigma)}\int_{t=0}^x (x-t)^{m-\sigma-1}w^{(m-1)}(t)\,dt \right],
\end{align*}
after $m-1$ integrations by parts followed by $m-1$ differentiations using \eqref{diffintegral}. Consequently
$$
\int_{s=0}^x D^\sigma w(s)\, ds =  \frac1{\Gamma(m-\sigma)}\int_{t=0}^x (x-t)^{m-\sigma-1}w^{(m-1)}(t)\,dt.
$$
This is an Abel integral equation for the function $w^{(m-1)}$. Thus from~\citet[Section 2]{SKR93} it follows that
$$
w^{(m-1)}(x)  = \frac{d}{dx}\left\{\frac1{\Gamma(\sigma+1-m)}\int_{t=0}^x (x-t)^{\sigma-m}
     \left[ \int_{s=0}^t D^\sigma w(s)\, ds \right]  \,dt \right\}.
$$
But $D^\sigma w = D_*^\sigma r \in C[0,1]$ by hypothesis, so we can integrate by parts then use \eqref{diffintegral} to get
\begin{align*}
w^{(m-1)}(x)  &= \frac{d}{dx}\left[\frac1{\Gamma(\sigma+2-m)}\int_{t=0}^x (x-t)^{\sigma+1-m}
        D^\sigma w(t) \,dt \right] \\
    &= \frac1{\Gamma(\sigma+1-m)}\int_{t=0}^x (x-t)^{\sigma-m}D^\sigma w(t) \,dt.
\end{align*}
As the integrand here lies in the space $L_1[0,1]$ of Lebesgue integrable functions, it follows that $w^{(m-1)}$ is absolutely continuous on $[0,1]$. Hence $r^{(m-1)} = (w + T_{m-1}[r;0])^{(m-1)}$ is absolutely continuous on $[0,1]$, i.e., $r\in A^m[0,1]$.

We come now to the main part of the proof.
Set $\phi =D_\ast^\sigma r$. As $r\in A^m[0,1]$, by \citet[Corollary 3.9]{Diet10} one has
$$
r(x) = \sum_{j=0}^{m-1}\frac{r^{(j)}(0)}{j!} x^j + \frac1{\Gamma(\sigma)} \int_{s=0}^x (x-s)^{\sigma-1}\phi(s)\,ds\,.
$$
Integration by parts yields
\begin{align}
r(x) &= \sum_{j=0}^{m-1}\frac{r^{(j)}(0)}{j!} x^j + \frac{\phi(0)}{\sigma\Gamma(\sigma)}\, x^\sigma
        + \frac1{\sigma\Gamma(\sigma)} \int_{s=0}^x (x-s)^{\sigma}\phi'(s)\,ds  \ \text{ for }\ 0 \le x \le 1.
        \notag
\end{align}
Differentiating this formula $m$ times using $\Gamma(n+1) = n\Gamma(n)$ and \eqref{diffintegral}, we obtain
\begin{equation}\label{rm}
r^{(m)}(x) = \frac{\phi(0)}{\Gamma(\sigma-m+1)}\, x^{\sigma-m}
    + \frac1{\Gamma(\sigma-m+1)} \int_{s=0}^x (x-s)^{\sigma-m}\phi'(s)\,ds.
\end{equation}
Hence, since $\phi = D_\ast^\sigma r \in C^{q, m-\sigma}(0,1]$, for some constants $C$ one obtains
\begin{align}
|r^{(m)}(x)| &\le \frac{|\phi(0)|}{\Gamma(\sigma-m+1)}\,x^{\sigma-m}
    + \frac{C}{\Gamma(\sigma-m+1)} \int_{s=0}^x (x-s)^{\sigma-m} s^{\sigma-m}\,ds \notag\\
        &\le Cx^{\sigma-m} + Cx^{2(\sigma-m)+1} \notag\\
        &\le Cx^{\sigma-m}   \label{rxxbound}
\end{align}
as $x^{\sigma-m} > x^{\sigma-m}x^{\sigma+1-m} = x^{2(\sigma-m)+1}$, and \citet[Theorem D.6]{Diet10} was invoked to bound the integral (Euler's Beta function). This is the desired bound~\eqref{riinterpbound} for $i=m$. Furthermore, it is easy to see from~\eqref{rm} that $r^{(m)}\in C(0,1]$.

We now deduce~\eqref{riinterpbound} for $i=m+1,m+2,\dots$ from~\eqref{rm}. Applying Lemma~\ref{lem:technical}  with $\psi(s)= \phi'(s)$ to differentiate~\eqref{rm}, one gets
$$
|r^{(m+1)}(x)| \le C [x^{\sigma-m-1} +  x^{2(\sigma-m)}] \le  Cx^{\sigma-m-1},
$$
which proves~\eqref{riinterpbound} for $i=m+1$, and
\begin{equation}\label{rm1}
r^{(m+1)}(x) = \frac{\phi(0)}{\Gamma(\sigma-m)}\, x^{\sigma-m-1}
    + \frac1{\Gamma(\sigma-m+1)} \cdot \frac1{x} \int_{s=0}^x (x-s)^{\sigma-m}[s\phi''(s)+(\sigma-m+1)\phi'(s)]\,ds,
\end{equation}
from which one can see that $r^{(m+1)}\in C(0,1]$.

Comparing \eqref{rm} and \eqref{rm1}, the relationship between their leading terms is simple, while the integrals in both are $O(x^{2(\sigma-m)+1})$ but the integral in~\eqref{rm1} is multiplied by $1/x$. One now proceeds to differentiate~\eqref{rm1}, invoking Lemma~\ref{lem:technical} with $\psi(s) = s\phi''(s)+(\sigma-m+1)\phi'(s)$; this will yield a rather complicated formula for $r^{(m+2)}(x)$ that involves two integrals, but one sees readily that these integrals are
$$
\frac1{x}\cdot O(x^{2(\sigma-m)}) + \frac1{x^2}\cdot O(x^{2(\sigma-m)+1}),
$$
whence
$$
|r^{(m+2)}(x)| \le C [x^{\sigma-m-2} +  x^{2(\sigma-m)-1}] \le  Cx^{\sigma-m-2},
$$
which proves~\eqref{riinterpbound} for $i=m+2$.
Continuing in this way, each higher derivative of $r$ introduces a further factor $1/x$ in the estimates, and we can derive successively the bounds of~\eqref{riinterpbound} for $i=m+2, m+3, \dots$. The calculation must stop when one reaches an integral involving $\phi^{(q)}(s)$, i.e., when $i=q+m-1$.
\end{proof}

We now apply this result to our boundary value problem~\eqref{prob}.

\begin{corollary}\label{cor:PTintegerbounds}
Let $b,c, f \in C^{q,\mu}(0,1]$ for some integer $q \ge 2$ and $\mu \le 2-\delta$. Assume that $c \ge 0,\ \alpha_1 \ge 0$ and the condition~\eqref{AlRcondition} is satisfied. Then \eqref{prob} has a unique solution $u$ with
$u \in C^1[0,1] \cap C^{q+1}(0,1]$,  and for all $x\in (0,1]$ there exists a constant~$C$ such that
\begin{equation}\label{ubound}
|u^{(i)}(x)| \le
    \begin{cases}
    C &\text{if }\ i=0,1, \\
    Cx^{\delta-i} &\text{if }\ i=2,  3,\dots, q+1.
    \end{cases}
\end{equation}
\end{corollary}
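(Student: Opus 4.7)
The plan is to combine the three preceding results. I would first invoke Theorem~\ref{th:PTresult}, whose only nontrivial hypothesis is that the homogeneous version of~\eqref{prob} (with $f \equiv 0$ and $\gamma_0 = \gamma_1 = 0$) admits only the trivial solution in $S$. That uniqueness will be supplied by the comparison principle (Theorem~\ref{th:AlRpositivity}), and the derivative bounds~\eqref{ubound} are then read off from Theorem~\ref{th:purediffbounds}.

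First I would verify the uniqueness hypothesis of Theorem~\ref{th:PTresult}. Let $w \in S$ satisfy the homogeneous problem. Since $D_\ast^\delta w \in C^{q,\nu}(0,1] \subset C[0,1]$, the Caputo inversion formula \citep[Corollary~3.9]{Diet10} expresses $w$ as a linear polynomial plus a fractional integral of $D_\ast^\delta w$, and differentiating that identity shows $w \in C^1[0,1]$. Theorem~\ref{th:purediffbounds} then applies to $w$ with $\sigma = \delta$, $m = 2$, yielding $w \in C^{q+1}(0,1]$ together with $|w''(x)| \le C x^{\delta - 2}$ on $(0,1]$. Because $\delta - 2 \in (-1,0)$, taking $\theta = 2 - \delta \in (0,1)$ shows that both $w$ and $-w$ satisfy Regularity Hypothesis~1. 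Since $-D_\ast^\delta (\pm w) + b(\pm w)' + c(\pm w) = 0$ and the structural assumptions $c \ge 0$, $\alpha_1 \ge 0$ and~\eqref{AlRcondition} are in force, Theorem~\ref{th:AlRpositivity} applied in turn to $w$ and $-w$ gives $\pm w \ge 0$ on $[0,1]$, that is, $w \equiv 0$.

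With uniqueness established, Theorem~\ref{th:PTresult} produces a unique $u \in S$ with $u \in C^1[0,1]$, so that $D_\ast^\delta u \in C^{q,\nu}(0,1]$ where $\nu = \max\{\mu,\, 2-\delta\} = 2-\delta$ by the hypothesis $\mu \le 2 - \delta$. A second application of Theorem~\ref{th:purediffbounds} to $u$ with $\sigma = \delta$ and $m = 2$ delivers $u \in C^{q+1}(0,1]$ together with the pointwise bounds $|u^{(i)}(x)| \le C x^{\delta - i}$ for $i = 2, \ldots, q+1$; combined with $u \in C^1[0,1]$ this gives exactly~\eqref{ubound}.

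The only delicate step is the verification of the uniqueness hypothesis, where one must upgrade the abstract membership $w \in S$ to enough classical regularity for the comparison principle to bite. The apparent circularity dissolves because Theorem~\ref{th:purediffbounds} itself has no solvability hypothesis: it simply interpolates the known regularity of $D_\ast^\delta w$ through to the classical derivatives of $w$, and so it can be used as a lemma that feeds Theorem~\ref{th:AlRpositivity}, which in turn supplies the missing hypothesis of Theorem~\ref{th:PTresult}.
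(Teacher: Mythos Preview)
Your proof is correct and follows the same three-step strategy as the paper: verify the uniqueness hypothesis of Theorem~\ref{th:PTresult} via the comparison principle of Theorem~\ref{th:AlRpositivity}, invoke Theorem~\ref{th:PTresult} for existence, and then read off the derivative bounds from Theorem~\ref{th:purediffbounds}. Your handling of the uniqueness step is in fact more careful than the paper's: you route through Theorem~\ref{th:purediffbounds} to establish Regularity Hypothesis~1 for an arbitrary $w\in S$, whereas the paper asserts directly (and somewhat loosely for $\mu\ge 0$) that functions in $C^{q,\mu}(0,1]$ satisfy Regularity Hypothesis~1 and checks uniqueness there.
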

\begin{proof}
Observe that any function in~$C^{q,\mu}(0,1]$ satisfies Regularity Hypothesis~1 of Section~\ref{sec:maxprin} since $\mu \le 2-\delta <1$. Consequently Theorem~\ref{th:AlRpositivity} implies that if $f\equiv 0,\ \gamma_0=0$ and $\gamma_1=0$, then the problem~\eqref{prob} has  in~$C^{q,\mu}(0,1]$ only the trivial solution $u\equiv 0$. Hence Theorem~\ref{th:PTresult} yields existence and uniqueness of a solution $u$ of~\eqref{prob} with $u \in C^1[0,1]$ and $D_\ast^\delta u \in C^{q, 2-\delta}(0,1]$. An appeal to  Theorem~\ref{th:purediffbounds} completes the proof.
\end{proof}

\begin{remark}\label{rem:fastderivbounds}
If we impose the additional hypothesis that $|b(x)| \ge C >0$ on $[0,1]$, it is then possible to give a simple proof of Corollary~\ref{cor:PTintegerbounds} directly from Theorem~\ref{th:PTresult} without using Theorem~\ref{th:purediffbounds}: differentiating~\eqref{proba} then solving for $u''$ yields
\begin{equation}\label{u2x}
u''(x) = \frac1{b(x)}\, \left[ f' +  \left(D_\ast^ \delta u\right)' -c'u - (c+b')u'\right](x),
\end{equation}
and an appeal to the bounds of Theorem~\ref{th:PTresult} yields \eqref{ubound} immediately for the case $i=2$.
One can then differentiate~\eqref{u2x} iteratively and use Theorem~\ref{th:PTresult} to prove~\eqref{ubound} for $i=3,4,\dots$.

If instead $b(x) \equiv 0$ and $|c(x)| \ge C >0$ on $[0,1]$, a similar technique will work---note that $b \equiv 0$ enables the conclusion of Theorem~\ref{th:PTresult} to be strengthened to $D_\ast^\delta u \in C^{q, \nu}(0,1]$ where $\nu = \max\{\mu, 1-\delta\}$ by \citet[Remark 2.2]{PT12}.
\end{remark}

Finally, we give an example to show that the bounds of Theorem~\ref{th:purediffbounds} are sharp.

\begin{example}\label{exa:interpolationsharp}
Let $m$ be a positive integer with $m-1 < \sigma < m$.
Set $r(x) = x^\sigma + x^{2\sigma-m+1}$ for $x \in[0,1]$. Clearly $r \in C^{m-1}[0,1] \cap C^\infty (0,1]$. Then from \cite[Appendix B]{Diet10} one gets
$$
D_\ast^\sigma r(x) = \Gamma(\sigma+1) + \frac{\Gamma(2\sigma-m+2)}{\Gamma(\sigma-m+2)} \, x^{\sigma-m+1}.
$$
Hence $D_\ast^\sigma r \in C[0,1] \cap C^\infty (0,1]$ and $|D_\ast^\sigma r(x)| \le \Gamma(\sigma+1) + [\Gamma(2\sigma-m+2)/\Gamma(\sigma-m+2)]$, while
\begin{align*}
(D_\ast^\sigma r)^{(i)}(x) &= \frac{\Gamma(2\sigma-m+2)}{\Gamma(\sigma-m+2-i)}\,x^{\sigma-m+1-i} \\
\intertext{and}
r^{(i)}(x) &= \frac{\Gamma(\sigma+1)}{\Gamma(\sigma+1-i)}\,x^{\sigma-i} + \frac{\Gamma(2\sigma-m+2)}{\Gamma(2\sigma-m+2-i)}\,x^{2\sigma-m+1-i} \ \text{ for }\  i=1,2,\dots
\end{align*}
Thus the derivatives of $D_\ast^\sigma r$ satisfy the hypotheses of Theorem~\ref{th:purediffbounds} and the derivatives of $r$ agree with~\eqref{riinterpbound} for $i \ge 2$, i.e., the outcome of Theorem~\ref{th:purediffbounds} cannot be sharpened for $i \ge 2$.
\end{example}

\section{Discretization and convergence}\label{sec:discrete}

\subsection{The discretization of the boundary value problem}\label{sec:propertiesA}
Assume the hypotheses of Corollary~\ref{cor:PTintegerbounds}. Let $N$ be a positive integer. Subdivide $[0,1]$ by the uniform mesh $x_j = j/N =: jh$, for $j=0,1,\dots, N$. Then the standard discretization of $-D_\ast^\delta u(x_j)$ for $j=1,2,\dots,N-1$ is \citep[see, e.g.,][]{So12} given by
\begin{equation}\label{DdeltaDisc}
-D_\ast^\delta u(x_j) \approx -\,\frac1{\hdg}
    \sum_{k=0}^{j-1} d_{j-k}\left(u_{k+2}-2u_{k+1}+u_k \right),
\end{equation}
where $u_k$ denotes the computed approximation to $u(x_k)$,  and we set
\begin{equation}\label{djk}
d_r = r_+^{2-\delta} - (r-1)_+^{2-\delta}\ \text{ for all integers } r,
\end{equation}
with
$$
s_+ = \begin{cases}
        s &\text{if } s\ge 0, \\
        0 &\text{if } s <0.
      \end{cases}
$$
Note that $d_r =0$ for $r \le 0$.

Set $g_j = g(x_j)$ for each mesh point $x_j$, where $g$ can be $b,c$ or $f$. To discretize the convective term~$bu'$ we shall use \emph{simple upwinding} \cite[p.47]{RST08}, because the standard approximation~$u'(x_j)\approx (u_{j+1}-u_{j-1})/(2h)$ may yield a non-monotone difference scheme when $\delta$ is near 1; see~\cite{future_paper_GS} for details. Thus we use the approximation
\begin{equation}\label{bu'}
(bu')(x_j) \approx b_jD^\aleph u_j := \begin{cases}
            b_j(u_{j+1}-u_j)/h &\text{if }\, b_j <0,\\
            b_j(u_j-u_{j-1})/h &\text{if }\, b_j \ge 0.
        \end{cases}
\end{equation}
This difference approximation can also be written as
\begin{equation}\label{bu'2}
b_jD^\aleph u_j = - \frac{(b_j+|b_j|)u_{j-1}}{2h} + \frac{|b_j|u_j}{h} + \frac{(b_j-|b_j|)u_{j+1}}{2h}\,.
\end{equation}

The full discretization of~\eqref{proba} is
\begin{equation}\label{fulldisc}
-\,\frac1{\hdg}
    \sum_{k=0}^{j-1} d_{j-k}\left(u_{k+2}-2u_{k+1}+u_k \right) +b_jD^\aleph u_j+c_ju_j= f_j,
\end{equation}
for $j=1,2,\dots,N-1$. The boundary conditions \eqref{probb} are discretized by approximating $u'(0)$ by $(u_1-u_0)/h$ and $u'(1)$ by $(u_N-u_{N-1})/h$.

Let $A = (a_{jk})_{j,k=0}^N$ denote the $(N+1)\times(N+1)$ matrix corresponding to this discretization of \eqref{prob}, i.e., $A\vec u = \vec f$ where
$\vec u := (u_0\ u_1\dots u_N)^T, \ \vec f := ( \gamma_0 \  f_1\ f_2\dots f_{N-1}\ \gamma_1 )^T$ and the superscript $T$ denotes transpose.
Thus the $0^\text{th}$ row of $A$ is $( (1+\alpha_0 h^{-1})\ \ -\alpha_0 h^{-1}\ \  0\ 0\ \dots 0)$ and its
$N^\text{th}$ row is\\ $(0\ 0\ \dots 0 \ \ -\alpha_1 h^{-1}\ \ (1+\alpha_1 h^{-1}))$. For $j=1,2,\dots,N-1$, the entries of the $j^\text{th}$ row of $A$ satisfy
\begin{subequations}\label{Hessenberg}
\begin{align}
 a_{j0} =& \frac{-d_j}{\hdg} -\epsilon_{j1}\frac{b_1+|b_1|}{2h} \,, \label{Hessenberga}\\
 a_{j1} =&\frac{-d_{j-1}+2d_{j}}{\hdg}
     -\epsilon_{j2}\frac{b_2+|b_2|}{2h}+\epsilon_{j1}
        \left(\frac{|b_1|}{h} + c_1\right),  \label{Hessenbergb}\\
 a_{jk} =& \frac{ -d_{j-k}+2d_{j-k+1}-d_{j-k+2}}{\hdg}
    -\epsilon_{j,k+1}\frac{b_j+|b_j|}{2h}
        +\epsilon_{jk} \left(\frac{|b_j|}{h}+c_j \right) +\epsilon_{j,k-1}\frac{b_j-|b_j|}{2h}\notag\\
       &\hspace{85mm} \text{for } k=2,3,\dots,N, \label{Hessenbergc}
\end{align}
\end{subequations}
where we set
$$
\epsilon_{jk}=
\begin{cases}
1 &\text{if } j=k, \\
0 &\text{otherwise}.
\end{cases}
$$
Hence the matrix $A$ is lower Hessenberg.

Observe that~\eqref{fulldisc} implies that
\begin{equation}\label{rowsum}
\sum_{k=0}^N a_{jk} = c_j \ \text{ for } j=1,2,\dots, N-1.
\end{equation}

We shall prove various inequalities for the non-zero entries of $A$. First, $a_{jj} >0$ for all~$j$ by \eqref{Hessenberg}, \eqref{djk} and $c \ge 0$.
From~\eqref{Hessenberga}, one has
\begin{equation}\label{j0}
a_{j0} <0 \quad\text{for }j=1,2,\dots, N-1.
\end{equation}

By \eqref{Hessenbergb},
\begin{equation}\label{a21}
a_{21} = \ds\frac{2^{3-\delta} - 3}{\hdg}- \frac{b_2+|b_2|}{2h} \,,
\end{equation}
so the sign of $a_{21}$ depends on $\delta, h$ and $b$.

\begin{lemma}\label{missing1}
One has $a_{j1} >0$ for $j=3,4,\dots, N-1$.
\end{lemma}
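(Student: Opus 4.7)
For $j\ge 3$ the Kronecker-type factors $\epsilon_{j1}$ and $\epsilon_{j2}$ appearing in \eqref{Hessenbergb} both vanish, so the formula for $a_{j1}$ collapses to
$$
a_{j1} = \frac{2d_j - d_{j-1}}{\hdg}.
$$
Since the prefactor $1/(\hdg)$ is positive, the lemma is equivalent to the purely numerical claim $2d_j > d_{j-1}$ for every $j\ge 3$.

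The plan is to work from the integral representation
$$
d_r = (2-\delta)\int_0^1 (r-1+s)^{1-\delta}\,ds,
$$
which follows at once from $d_r = r^{2-\delta}-(r-1)^{2-\delta}$ and the fundamental theorem of calculus. Writing $\alpha := 2-\delta\in(0,1)$, I would then express
$$
2d_j - d_{j-1} = \alpha\int_0^1 \bigl[\,2(j-1+s)^{\alpha-1} - (j-2+s)^{\alpha-1}\,\bigr]\,ds,
$$
and reduce the lemma to showing that the bracketed integrand is strictly positive throughout $[0,1]$ for each $j\ge 3$. Dividing by $(j-1+s)^{\alpha-1}>0$ and using $1-\alpha>0$, this pointwise positivity is equivalent to
$$
1 + \frac{1}{j-2+s} \;=\; \frac{j-1+s}{j-2+s} \;<\; 2^{1/(1-\alpha)}.
$$
For $j\ge 3$ and $s\in[0,1]$ one has $j-2+s\ge 1$, so the left-hand side is at most $2$, with equality only at the corner $j=3,\ s=0$. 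The right-hand side strictly exceeds $2$ because $\alpha\in(0,1)$ forces $1/(1-\alpha)>1$. The strict inequality therefore holds uniformly, the integrand is strictly positive, and $2d_j > d_{j-1}$ follows.

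The main obstacle is precisely this borderline corner $j=3,\ s=0$. A cruder approach---for example applying the mean value theorem to write $d_r = \alpha\,\xi_r^{\alpha-1}$ with $\xi_r\in(r-1,r)$ and then bounding $\xi_j/\xi_{j-1}$ by the worst case $j/(j-2)\le 3$---only yields $d_j/d_{j-1}>1/2$ when $\alpha > 1-\log_3 2 \approx 0.369$, and so breaks down as $\delta$ approaches $2$. The sharper integral argument combined with the tight bound $1+1/(j-2+s)\le 2$ is what forces the conclusion for every $\delta\in(1,2)$.
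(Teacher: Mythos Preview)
Your proof is correct and follows essentially the same route as the paper: both arguments reduce to $2d_j>d_{j-1}$ and then hinge on the observation that for $j\ge 3$ the relevant ratio $1+\tfrac{1}{j-2+s}$ (in your notation) is at most $2$, which is strictly below the threshold $2^{1/(\delta-1)}$. The only cosmetic difference is that the paper extracts a single comparison point via Cauchy's mean value theorem applied to $x\mapsto x^{2-\delta}$ and $x\mapsto (x-1)^{2-\delta}$ on $[j-1,j]$, whereas you use the integral representation of $d_r$ and argue pointwise; the underlying inequality is identical.
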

\begin{proof}
For $j=3,4,\dots, N-1$, equations~\eqref{Hessenbergb} and~\eqref{djk} yield
\begin{align}
\hdg a_{j1} &= -(j-1)^{2-\delta}+(j-2)^{2-\delta}+2j^{2-\delta}-2(j-1)^{2-\delta} \notag\\
    &= 2 \left[(j-1)^{2-\delta}-(j-2)^{2-\delta}\right]
        \left[\frac{j^{2-\delta}-(j-1)^{2-\delta}}{(j-1)^{2-\delta}-(j-2)^{2-\delta}} \, - \frac12\right].  \notag
\end{align}
By Cauchy's mean value theorem, for some $\theta\in (j-1, j)$ we have
\begin{align*}
\frac{j^{2-\delta}-(j-1)^{2-\delta}}{(j-1)^{2-\delta}-(j-2)^{2-\delta}}
    &= \frac{(2-\delta)\theta^{1-\delta}}{(2-\delta)(\theta-1)^{1-\delta}}
    = \left( 1 - \frac1{\theta}\right)^{\delta-1}
    > \left( 1 - \frac12\right)^{\delta-1}
    > \frac12\,,
\end{align*}
where we used $\theta > j-1 \ge 2$. It follows that $a_{j1} >0$.
\end{proof}

\begin{lemma}\label{lem:missing2}
One has
$a_{jk} < 0$ for $j=1,2,\dots,N-1$ and $k\in \{2,3,\dots, j-2,j-1, j+1\}$.
\end{lemma}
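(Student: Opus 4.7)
The plan is to split the analysis into three subcases according to which of the indicator symbols $\epsilon_{j,\cdot}$ appearing in~\eqref{Hessenbergc} are nonzero, and in each case to rewrite the ``fractional'' contribution $-d_{j-k}+2d_{j-k+1}-d_{j-k+2}$ in terms of forward differences of the auxiliary function $\phi(r):=r^{2-\delta}$. The definition~\eqref{djk} gives $d_r=\phi(r)-\phi(r-1)$ for $r\ge 1$ and $d_r=0$ for $r\le 0$, so a short computation yields
$$
-d_r+2d_{r+1}-d_{r+2}=\phi(r-1)-3\phi(r)+3\phi(r+1)-\phi(r+2)=-\Delta^3\phi(r-1),
$$
where $\Delta^3\phi(n):=\phi(n+3)-3\phi(n+2)+3\phi(n+1)-\phi(n)$. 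Since $\phi'''(x)=(2-\delta)(1-\delta)(-\delta)x^{-1-\delta}>0$ for $x>0$ and $\delta\in(1,2)$, the goal reduces to showing $\Delta^3\phi(r-1)>0$ and then checking that the convective contributions to~$a_{jk}$ do not spoil the sign.

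For $k\in\{2,3,\dots,j-2\}$, set $r:=j-k\ge 2$; all three of $\epsilon_{j,k-1},\epsilon_{jk},\epsilon_{j,k+1}$ vanish, so $a_{jk}$ equals the fractional contribution alone. Since $\phi\in C^\infty[r-1,r+2]\subset C^\infty[1,\infty)$, the standard mean value theorem for divided differences yields $\Delta^3\phi(r-1)=\phi'''(\xi)>0$ for some $\xi\in(r-1,r+2)$, whence $a_{jk}<0$. For $k=j+1$, only $\epsilon_{j,k-1}=\epsilon_{jj}=1$ is nonzero among the three indicators, and the $d$-indices collapse via $d_{-1}=d_0=0$, $d_1=1$ to give
$$
a_{j,j+1}=-\frac{1}{\hdg}+\frac{b_j-|b_j|}{2h}\le-\frac{1}{\hdg}<0,
$$
because $b_j-|b_j|\le 0$.

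The remaining case $k=j-1$ (corresponding to $r=1$) is the one I expect to be the main obstacle. Here $\epsilon_{j,k+1}=\epsilon_{jj}=1$ contributes the nonpositive term $-(b_j+|b_j|)/(2h)$, so it suffices to show that the fractional part is strictly negative; by~\eqref{djk} this part equals
$$
-d_1+2d_2-d_3=-3+3\cdot 2^{2-\delta}-3^{2-\delta}.
$$
The mean value argument used in the previous paragraph \emph{fails} at this endpoint, because $\phi'''$ is singular at $0$ and the relevant third difference $\Delta^3\phi(0)$ depends on values on the non-smooth interval $[0,3]$. I would instead set $s:=2-\delta\in(0,1)$ and verify by elementary calculus that $f(s):=3+3^s-3\cdot 2^s>0$ for $s\in[0,1)$: one has $f(0)=1$ and $f(1)=0$, and
$$
f'(s)=2^s\bigl[(\ln 3)(3/2)^s-3\ln 2\bigr]<0\quad\text{on }[0,1],
$$
because $(\ln 3)(3/2)^s\le (3/2)\ln 3=\ln\sqrt{27}<\ln 8=3\ln 2$ throughout $[0,1]$. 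Thus $f$ is strictly decreasing and therefore positive on $[0,1)$, which gives $a_{j,j-1}<0$ and closes the argument.
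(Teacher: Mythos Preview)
Your proof is correct. The paper takes a slightly more economical route for the range $k\in\{2,\dots,j-1\}$: instead of rewriting the fractional contribution as a \emph{third} forward difference of $\phi(r)=r^{2-\delta}$, it views $-d_{j-k}+2d_{j-k+1}-d_{j-k+2}$ directly as the negative \emph{second} difference of $r\mapsto d_r=r^{2-\delta}-(r-1)^{2-\delta}$, so that a single Taylor/mean-value step gives $-d''(\eta)$ for some $\eta\in(j-k,\,j-k+2)$, and then checks $d''(\eta)=(2-\delta)(1-\delta)\bigl[\eta^{-\delta}-(\eta-1)^{-\delta}\bigr]>0$. The payoff is that this argument also covers the borderline case $k=j-1$ (i.e.\ $r=1$) without any extra work: the mean-value identity needs only continuity of $d$ on $[1,3]$ and existence of $d''$ on the open interval $(1,3)$, and the resulting $\eta>1$ keeps $(\eta-1)^{-\delta}$ finite. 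Your third-difference reformulation pushes the singularity of the underlying power function to the left endpoint of the evaluation interval (you would need $\phi\in C^3$ near $0$), which is what forces you to handle $k=j-1$ separately; your direct monotonicity argument for $f(s)=3+3^{s}-3\cdot 2^{s}$ is clean and correct, but with the paper's second-difference viewpoint it is unnecessary. (In fact even your $\Delta^3\phi$ route would go through at $r=1$ via the Rolle-based proof of the divided-difference mean-value theorem, since $\phi\in C[0,3]$ with $\phi'''$ existing on $(0,3)$; but your explicit verification is perfectly acceptable.) The treatment of $k=j+1$ is essentially the same in both proofs.
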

\begin{proof}
If $k=j+1$, the inequality $a_{j,j+1}<0$ follows easily from \eqref{Hessenbergc} and \eqref{djk}. Thus consider the case $k\in \{2,3,\dots, j-1\}$. Taylor expansions imply that for some $\eta\in (j-k, j-k+2)$ we have
\begin{align*}
-d_{j-k}+2d_{j-k+1}-d_{j-k+2} &= -\frac{d^2}{dr^2}\left[ r^{2-\delta} - (r-1)^{2-\delta} \right]\bigg|_{r=\eta} \\
    &= -(2-\delta)(1-\delta)\left[ \eta^{-\delta} - (\eta-1)^{-\delta} \right] \\
    &<0.
\end{align*}
Hence \eqref{Hessenbergc} implies that $a_{jk} < 0$ for $k\in \{2,3,\dots, j-1\}.$
\end{proof}

This completes the description of the entries $a_{jk}$ defined in~\eqref{Hessenberg}. The sign pattern of $A$ is
\setcounter{MaxMatrixCols}{12}
$$
\begin{pmatrix}
+ & - & 0 & 0 & 0 & 0 & 0 & \hdotsfor{3}  0 & 0\\
- & + & - & 0 & 0 & 0 & 0 & \hdotsfor{3} 0 & 0\\
- &(4.9)& + & - & 0 & 0 & 0 & \hdotsfor{3} 0 & 0\\
- & + & - & + & - & 0 & 0 & \hdotsfor{3} 0 & 0\\
- & + & - & - & + & - & 0 & \hdotsfor{3} 0 & 0\\
\vdots &\vdots&\vdots &\vdots &&&&&&& \vdots\\
- & + & - & - & \hdotsfor{3} & - & + & - & 0\\
- & + & - & - & \hdotsfor{4} & - & + & -\\
0 & 0 & \hdotsfor{7} 0 & - & + \\
\end{pmatrix}.
$$

\subsection{Monotonicity of the discretization matrix $A$}\label{sec:monotoneA}
We shall show that $A^{-1}$ exists and $A^{-1} \ge 0$. Here and subsequently, an inequality between two matrices or vectors means that this inequality holds true for all the corresponding pairs of entries in those matrices or vectors.


 The positive off-diagonal entries in column~1 of $A$ are inconvenient for our analysis. We shall change their signs, while simultaneously simplifying column~0 of $A$, by the following device. Set
$$
A' = E^{(N-1)}E^{(N-2)}\dots E^{(1)}A
$$
where
the elementary matrix $E^{(k)} := (e_{ij}^{(k)})_{i,j=0}^N$ with
$$
e_{ij}^{(k)}  := \delta_{ij} - \frac{a_{k0}}{a_{00}}\, \delta_{ik}\delta_{j0}.
$$
This multiplication of matrix $A$ on the left by elementary matrices  adds a positive multiple of row 0 of $A$ to each lower row to reduce to zero the off-diagonal entries of column~0. Write $A' = (a_{jk}')_{j,k=0}^N$.

Row 0 of $A'$ is $(a_{00}\ a_{01}\ 0\ 0\dots 0)$, where we recall that  $a_{00}= 1+\alpha_0 h^{-1}$ and $a_{01} = -\alpha_0 h^{-1}$.
By construction $a_{j0}'=0$ for $j = 1,2 \dots, N$. For $k >1$ and all $j$ we clearly have $a_{jk}' = a_{jk}$. The remaining entries of column~1 of $A'$ will be examined below.

\begin{lemma}\label{lem:sign1stcol}
The entries of column 1 of the matrix~$A'$ satisfy
$$
a_{11}' > 0 \ \text{ and }\ a_{j1}'  < 0 \ \text{ for } j=2,3,\dots, N-1.
$$
\end{lemma}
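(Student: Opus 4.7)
The plan is to derive a closed formula for $a_{j1}'$, dispose of $j=1$ by inspection, and reduce the case $j\ge 2$ to a single inequality for a function $\Phi$ of one real variable that must be shown to be negative on a discrete set of values of $t\in(0,1]$.

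\textbf{Setup.} Applying the $E^{(k)}$'s amounts to adding, for each $k$, the positive multiple $m_k := -a_{k0}/a_{00}$ of row $0$ to row $k$. Since $a_{00}=1+\alpha_0/h$ and $a_{01}=-\alpha_0/h$, for $j=1,\dots,N-1$ this gives
$$
a_{j1}' \;=\; a_{j1} + m_j a_{01} \;=\; a_{j1} + \lambda\, a_{j0}, \qquad \lambda := \frac{\alpha_0}{h+\alpha_0}\in(0,1).
$$
For $j=1$ I will substitute \eqref{Hessenberga}--\eqref{Hessenbergb} together with $d_0=0$, $d_1=1$ and simplify the contribution of the convective term by splitting on the sign of $b_1$; the resulting expression for $a_{11}'$ is a sum of non-negative terms with the $(2-\lambda)/(\hdg)$ contribution strictly positive, so $a_{11}'>0$.

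\textbf{Reduction for $j\ge 2$.} Using \eqref{Hessenberga}--\eqref{Hessenbergc} and the definition of $d_r$, a short calculation yields
$$
\hdg\,a_{j1}' \;=\; (2-\lambda)j^{2-\delta} - (3-\lambda)(j-1)^{2-\delta} + (j-2)^{2-\delta} \;-\; \epsilon_{j,2}\,\frac{(b_2+|b_2|)\hdg}{2h},
$$
and since the last term is non-positive it suffices to show that the first three summands are negative. Setting $t=1/(j-1)\in(0,1]$ and dividing by $(j-1)^{2-\delta}$, the claim becomes
$$
\Phi(t) \;:=\; (2-\lambda)(1+t)^{2-\delta} + (1-t)^{2-\delta} - (3-\lambda) \;<\; 0 \quad \text{for } t\in\{1,\tfrac12,\dots,\tfrac1{N-2}\}.
$$
Note $\Phi(0)=0$ and $\Phi'(0)=(2-\delta)(1-\lambda)>0$, so a crude sign analysis will not do; I will need a quantitative estimate.

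\textbf{Quantitative bound on $\Phi$.} I write $\Phi(t) = H(t) + (1-\lambda)\bigl[(1+t)^{2-\delta}-1\bigr]$, where
$H(t) := (1+t)^{2-\delta} + (1-t)^{2-\delta} - 2$ is the concavity gap of $x\mapsto x^{2-\delta}$. The mean value theorem (with $\xi\in(1,1+t)$ and $\xi^{1-\delta}\le 1$ since $1-\delta<0$) bounds the perturbation by $(1+t)^{2-\delta}-1\le (2-\delta)t$. For $H$, I use $H(0)=H'(0)=0$ together with the observation that $H''(t)=(2-\delta)(1-\delta)\bigl[(1+t)^{-\delta}+(1-t)^{-\delta}\bigr]$ attains its maximum (least-negative) value at $t=0$ because the bracket is convex in $t$ with minimum value $2$. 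Taylor's theorem with Lagrange remainder then gives $H(t)\le H''(0)t^2/2=-(2-\delta)(\delta-1)t^2$, and altogether
$$
\Phi(t) \;\le\; (2-\delta)\,t\bigl[(1-\lambda) - (\delta-1)\,t\bigr].
$$

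\textbf{Applying \eqref{AlRcondition} and concluding.} From $\alpha_0\ge 1/(\delta-1)$ one has $1-\lambda = h/(h+\alpha_0) \le h(\delta-1)$, i.e.\ $(1-\lambda)/(\delta-1)\le h$. Meanwhile $t=1/(j-1)\ge 1/(N-2)>1/N=h$ for $j\in\{2,\dots,N-1\}$. Hence $(\delta-1)t>h(\delta-1)\ge 1-\lambda$, which makes the bracket in the upper bound strictly negative, so $\Phi(t)<0$ for every admissible $t$, and therefore $a_{j1}'<0$ for $j=2,\dots,N-1$ (the extra non-positive term at $j=2$ only helps).

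\textbf{Main obstacle.} The hard part is recognising that $H(t)<0$ alone is too weak: the perturbation $(1-\lambda)[(1+t)^{2-\delta}-1]$ is of order $t$, and so one must extract a quadratic-in-$t$ bound on $|H(t)|$ to dominate it. The condition \eqref{AlRcondition} is exactly what forces $1-\lambda$ to be of order $h$, so that the quadratic gap wins as soon as $t>h$—a threshold that is automatically satisfied for every interior index $j$.
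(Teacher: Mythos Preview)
Your argument is correct and reaches the same conclusion as the paper, but by a genuinely different route. The paper never introduces your function $\Phi$; instead it works directly with the ratio $d_{j-1}/d_j$, applies Cauchy's mean value theorem to write this ratio as $(1+1/\eta)^{\delta-1}$ for some $\eta\in(j-2,j-1)$, bounds it below by $(1+1/(N-2))^{\delta-1}$, and then uses the elementary inequalities $t/(1+t)<\ln(1+t)<t$ to compare with $1+1/(N\alpha_0+1)=2-\lambda$. Your approach---the substitution $t=1/(j-1)$, the splitting $\Phi=H+(1-\lambda)[(1+t)^{2-\delta}-1]$, and a second-order Taylor bound on the concavity gap $H$---is more quantitative: it produces the clean estimate $\Phi(t)\le (2-\delta)t[(1-\lambda)-(\delta-1)t]$ and makes transparent why the condition \eqref{AlRcondition} is exactly the threshold (it forces $1-\lambda\le h(\delta-1)<(\delta-1)t$). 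The paper's argument is a little shorter and avoids any second-derivative computation, while yours gives an explicit upper bound on $\Phi$ that could be reused.

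One small technical point: your Lagrange-remainder step $H(t)=\tfrac12 H''(\xi)t^2$ with $\xi\in(0,t)$ requires $H\in C^2$ on $[0,t]$, but at $t=1$ (the case $j=2$) the term $(1-t)^{-\delta}$ in $H''$ blows up and $H'$ is not even bounded near $t=1$. This is harmless---your bound $H(t)\le -(2-\delta)(\delta-1)t^2$ holds for all $t<1$ and extends to $t=1$ by continuity of $H$ (or, equivalently, use the integral form of the remainder, which converges since $(1-s)\cdot(1-s)^{-\delta}=(1-s)^{1-\delta}$ is integrable). It would be worth saying this explicitly. Similarly, your remark that the bracket $(1+t)^{-\delta}+(1-t)^{-\delta}$ is ``convex with minimum value $2$'' is correct but needs either the observation that its derivative vanishes at $t=0$, or (more simply) that it is an even function of~$t$.
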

\begin{proof}
For $j =1,2,\dots, N-1$, from~\eqref{Hessenberg} one has
\begin{align}
a_{j1}' &= a_{j1} + \frac{\alpha_0 h^{-1}}{1+\alpha_0 h^{-1}}\, a_{j0}  \notag\\
        &= \frac1{\hdg}\left[-d_{j-1} + 2d_j - \frac{\alpha_0 h^{-1}}{1+\alpha_0 h^{-1}} d_j \right]
        -\epsilon_{j2}\frac{b_2+|b_2|}{2h}  \notag\\
        &\hspace{15mm} +\epsilon_{j1}\left(\frac{|b_1|}{h}
             -\frac{\alpha_0 h^{-1}}{1+\alpha_0 h^{-1}}\cdot\frac{b_1+|b_1|}{2h} +c_1\right)
             \notag\\
        &= \frac1{\hdg}\left[d_j \left( 1 + \frac{h}{h+\alpha_0}\right) - d_{j-1}\right] -\epsilon_{j2}\frac{b_2+|b_2|}{2h} \notag\\
        &\hspace{15mm} +\epsilon_{j1}\left(\frac{|b_1|}{h}
            -\frac{\alpha_0 h^{-1}}{1+\alpha_0 h^{-1}}\cdot\frac{b_1+|b_1|}{2h} +c_1\right).
            \label{aj1prime}
\end{align}
Hence $a_{11}' >0$ as $d_1=1,\ d_0=0$ and $c \ge 0$.

We show next that
\begin{equation}\label{aj1prime_term1}
d_j \left( 1 + \frac{h}{h+\alpha_0}\right) - d_{j-1}<0
\quad\text{for }j=2,3,\dots, N-1.
\end{equation}
This inequality is equivalent to
\begin{equation}\label{aji0}
\frac{d_{j-1}}{d_j} > 1 + \frac{h}{h+\alpha_0}\, = 1 + \frac1{N\alpha_0+ 1}\,.
\end{equation}
By Cauchy's mean value theorem, for some $\eta \in (j-2, j-1)$ we have
\begin{equation}
\frac{d_{j-1}}{d_j} = \frac{(j-1)^{2-\delta}-(j-2)^{2-\delta}}{j^{2-\delta}-(j-1)^{2-\delta}}
    = \frac{(2-\delta)\eta^{1-\delta}}{(2-\delta)(\eta+1)^{1-\delta}}
    = \left(1+ \frac1{\eta}\right)^{\delta-1}
    > \left(1 + \frac1{N-2}\right)^{\delta-1} \label{etaineq}
\end{equation}
because $j \le N-1$.
Hence, using the well-known inequality
$$
\frac{t}{1+t} < \ln (1+t) < t \quad\text{for }\ t>0,
$$
one gets
$$
\ln \left( \frac{d_{j-1}}{d_j} \right)  > (\delta-1) \ln \left(1 + \frac1{N-2}\right)
    > (\delta-1) \,\frac{1/(N-2)}{1+ 1/(N-2)}
    = \frac{\delta-1}{N-1}
$$
and
$$
\ln \left( 1 + \frac1{N\alpha_0 +1} \right) < \frac1{N\alpha_0 +1}\,.
$$
Consequently \eqref{aji0} is proved if
$$
\frac{\delta-1}{N-1} \ge \frac1{N\alpha_0 +1}\,,
$$
i.e., if $N-1 \le (\delta-1)(N\alpha_0 +1) = \alpha_0(\delta-1)N+ \delta-1$, which is true by~\eqref{AlRcondition}. Thus~\eqref{aj1prime_term1} is valid.

Combining \eqref{aj1prime} and \eqref{aj1prime_term1}, one obtains immediately $a_{j1}'  < 0$ for $j=2,3,\dots, N-1$.
\end{proof}

Set $\vec 0 := (0\ 0\ \dots 0)^T$. If all the off-diagonal entries of a square matrix $G$ are non-positive and there exists a vector $\vec v \ge \vec 0$ such that $G\vec v > \vec 0$, then \citep[see, e.g.,][]{Fi86} $G$ is an \emph{M-matrix,} i.e.,~$G^{-1}$ exists with $G^{-1} \ge 0$.

\begin{theorem}\label{th:Amonotone}
The matrix $A$ is invertible and $A^{-1} \ge 0$.
\end{theorem}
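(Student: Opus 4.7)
The plan is to deduce invertibility and non-negativity of $A^{-1}$ by applying the M-matrix criterion stated just before the theorem; however, Lemma~\ref{missing1} shows that $A$ itself has \emph{positive} off-diagonal entries $a_{j1}>0$ in column~1, so the criterion cannot be used on $A$ directly. This is precisely why the paper pre-multiplied $A$ by the elementary matrices $E^{(1)},\dots,E^{(N-1)}$: the plan is to verify the M-matrix criterion for $A'$ and then transfer the conclusion back to $A$.

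First I would check that every off-diagonal entry of $A'$ is non-positive. Row~$0$ coincides with row~$0$ of $A$, whose only off-diagonal entry is $a_{01}=-\alpha_0 h^{-1}\le 0$. All subdiagonal entries of column~$0$ of $A'$ vanish by construction. Column~$1$ of $A'$ has $a'_{11}>0$ and $a'_{j1}<0$ for $j=2,\dots,N-1$ by Lemma~\ref{lem:sign1stcol}, and $a'_{N1}=a_{N1}=0$. For $k\ge 2$ the columns of $A'$ coincide with those of $A$, so the required signs follow from Lemma~\ref{lem:missing2} (including the superdiagonal case $k=j+1$) together with the zero entries in row~$N$. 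The diagonal entries are already known to be positive.

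Next I would exhibit a positive test vector $\vec v$ with $A'\vec v>\vec 0$. The natural candidate is $\vec v=(1,1,\dots,1)^T$. By \eqref{rowsum}, row~$j$ of $A$ applied to $\vec v$ gives $c_j\ge 0$ for $j=1,\dots,N-1$, while the boundary rows give $1$. Because row~$j$ of $A'$ (for $1\le j\le N-1$) equals row~$j$ of $A$ plus $-a_{j0}/a_{00}$ times row~$0$ of $A$, and since $a_{00}=1+\alpha_0 h^{-1}>0$ while $a_{j0}<0$ by~\eqref{j0}, one obtains
\[
(A'\vec v)_j = c_j - \frac{a_{j0}}{a_{00}} > 0 \qquad\text{for } j=1,\dots,N-1,
\]
and $(A'\vec v)_0=(A'\vec v)_N=1>0$. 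The M-matrix criterion therefore yields that $(A')^{-1}$ exists and $(A')^{-1}\ge 0$.

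Finally I would transfer this to $A$. Setting $E:=E^{(N-1)}\cdots E^{(1)}$, the identity $A'=EA$ gives $A^{-1}=(A')^{-1}E$. Each $E^{(k)}$ is the identity plus the single entry $-a_{k0}/a_{00}>0$ in position $(k,0)$, so $E\ge 0$, and hence $A^{-1}\ge 0$. The main technical obstacle has already been dispatched by Lemma~\ref{lem:sign1stcol}, which ensures that the row reduction turns the bad positive entries of column~$1$ into non-positive ones without spoiling anything else; the remainder of the argument is essentially a bookkeeping exercise on the signs of entries and of row sums.
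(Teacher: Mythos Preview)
Your proposal is correct and follows essentially the same route as the paper: verify that $A'$ has the correct sign pattern, show $A'(1,\dots,1)^T>\vec 0$ via the row-sum computation (your $c_j-a_{j0}/a_{00}$ is exactly the paper's $(\alpha_0 h^{-1}/(1+\alpha_0 h^{-1})-1)a_{j0}+c_j$), conclude $A'$ is an M-matrix, and then pull the result back to $A$ using $A^{-1}=(A')^{-1}E$ with $E\ge 0$. Your exposition is slightly more explicit about checking the off-diagonal signs and about why $E\ge 0$, but the argument is the same.
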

\begin{proof}
The properties of $A$, the construction of $A'$, and Lemma~\ref{lem:sign1stcol} imply that the entries of the $(N+1)\times (N+1)$ matrix $A'$ are positive on its main diagonal and non-positive off this diagonal.
We see immediately that $\sum_{k=0}^N a_{0k}' = a_{00} + a_{01} =1$;
for $j=1,2,\dots, N-1$ one has
$$
\sum_{k=0}^N a_{jk}' = 0+ \left(a_{j1} + \frac{\alpha_0 h^{-1}}{1+\alpha_0 h^{-1}}\, a_{j0}\right) + \sum_{k=2}^Na_{jk}
    = \left( \frac{\alpha_0 h^{-1}}{1+\alpha_0 h^{-1}} -1 \right)a_{j0} + \sum_{k=0}^Na_{jk},
$$
but $\sum_{k=0}^Na_{jk}=c_j$ by~\eqref{rowsum} and $a_{j0}<0$ by~\eqref{j0}, so
$$
\sum_{k=0}^N a_{jk}' = \left( \frac{\alpha_0 h^{-1}}{1+\alpha_0 h^{-1}} -1 \right)a_{j0}+c_j  >0;
$$
and the final row of $A'$ is $(0\ 0\ \dots 0 \ \ -\alpha_1 h^{-1}\ \ (1+\alpha_1 h^{-1}))$. Hence $A'(1,1,\dots,1)^T > \vec 0$.
Consequently~$A'$ is an M-matrix. Thus~$(A')^{-1}$ exists with $(A')^{-1} \ge 0$.

But
$$
A' = E^{(N-1)}E^{(N-2)}\dots E^{(1)} A,
$$
which implies that
$$
A^{-1} =  (A')^{-1} E^{(N-1)}E^{(N-2)}\dots E^{(1)}
$$
exists, and, since it is a product of matrices with non-negative entries, $A^{-1} \ge 0$.
\end{proof}

The matrix $A$ is said to be \emph{monotone} because $A^{-1} \ge 0$; see, e.g.,~\citet{Fi86}.

\subsection{Error analysis}\label{sec:error}

Define the truncation error $\vec \tau := (\tau_0\ \tau_1\dots\tau_N)^T$ by
\begin{equation}\label{tau}
A(u-\vec u) = \vec\tau,
\end{equation}
where by ``$Au$" we mean that $A$~multiplies the restriction of $u$ to the mesh. Then
\begin{align}
\tau_0 &= (Au)_0  - \gamma_0 =\alpha_0 u'(x_0)  - \frac{\alpha_0}{h} \left[ u(x_1)-u(x_0) \right], \label{tau0} \\
\tau_N &=  (Au)_N-  \gamma_1 =-\alpha_1 u'(x_N)+ \frac{\alpha_1}{h}\left[ u(x_N)-u(x_{N-1}) \right], \label{tauN}
\end{align}
and for $j=1,2,\dots, N-1$,
\begin{equation}\label{tauj}
\tau_j =  (Au)_j - f(x_j) = (Au)_j + D_\ast^\delta u(x_j)-b(x_j)u'(x_j)-c(x_j)u(x_j).
\end{equation}

\begin{lemma}\label{lem:technical1}
There exists a constant $C$, which is independent of $j$,  such that
\begin{equation}\label{sum1}
\sum_{k=1}^{j-1} \big[ (j-k)^{2-\delta} - (j-k-1)^{2-\delta} \big] k^{\delta-3} \le Cj^{1-\delta}
    \ \text{ for all integers } j \ge 2.
\end{equation}
\end{lemma}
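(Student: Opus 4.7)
The plan is to first reduce the sum to one involving two pure power weights, then split it at $k=\lfloor j/2\rfloor$ and estimate the two halves using the fact that $\delta-3 < -1$ and $1-\delta > -1$.

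First I would eliminate the difference $a_k := (j-k)^{2-\delta} - (j-k-1)^{2-\delta}$. Setting $p=j-k\ge 1$ and applying the mean value theorem to $x\mapsto x^{2-\delta}$, one gets $a_k = (2-\delta)\xi^{1-\delta}$ for some $\xi\in(p-1,p)$ whenever $p\ge 2$; since $1-\delta<0$, this is bounded by $(2-\delta)(p-1)^{1-\delta}\le (2-\delta)(p/2)^{1-\delta} = 2^{\delta-1}(2-\delta)p^{1-\delta}$. For $p=1$ (i.e., $k=j-1$) one has $a_{j-1}=1=p^{1-\delta}$ directly. Hence $a_k \le C(j-k)^{1-\delta}$ for all $k\in\{1,\dots,j-1\}$, and the lemma reduces to showing
\begin{equation}\label{reducedsum}
S_j := \sum_{k=1}^{j-1} (j-k)^{1-\delta}\, k^{\delta-3} \le C j^{1-\delta}.
\end{equation}

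Next I would split $S_j = S_j^{(1)} + S_j^{(2)}$ at $K:=\lfloor j/2 \rfloor$. In $S_j^{(1)}$ (where $1\le k\le K$) the factor $j-k\ge j/2$, and since $1-\delta<0$ we have $(j-k)^{1-\delta} \le (j/2)^{1-\delta}$. Because $\delta-3<-1$, the series $\sum_{k=1}^{\infty} k^{\delta-3}$ converges, giving
$$
S_j^{(1)} \le (j/2)^{1-\delta}\sum_{k=1}^{\infty} k^{\delta-3} \le C j^{1-\delta}.
$$
In $S_j^{(2)}$ (where $K<k\le j-1$) one has $k\ge j/2$, and $\delta-3<0$ yields $k^{\delta-3}\le (j/2)^{\delta-3}$. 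Substituting $\ell = j-k$, $\ell$ runs over $1,\dots,j-K-1$, and since $1-\delta>-1$ the comparison with $\int_0^{j/2}\ell^{1-\delta}\,d\ell$ gives $\sum_{\ell=1}^{\lfloor j/2\rfloor}\ell^{1-\delta}\le C j^{2-\delta}$. Therefore
$$
S_j^{(2)} \le (j/2)^{\delta-3}\cdot C j^{2-\delta} \le C j^{-1}.
$$

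Finally, since $1<\delta<2$ one has $-1 \le 1-\delta$, so $j^{-1}\le j^{1-\delta}$ for $j\ge 1$; combining the two halves yields \eqref{reducedsum} and hence \eqref{sum1}. The only mild obstacle is book-keeping at the endpoints of the sum: the mean-value estimate for $a_k$ degenerates at $k=j-1$, which I handle by computing $a_{j-1}=1$ directly; and one must check $j=2$ separately (where $S_2=1$ and $j^{1-\delta}=2^{1-\delta}$), which is immediate.
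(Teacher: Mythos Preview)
Your proof is correct and follows essentially the same midpoint-splitting strategy as the paper: bound the first half using $(j-k)^{1-\delta}\le C j^{1-\delta}$ together with the convergence of $\sum k^{\delta-3}$, and bound the second half using $k^{\delta-3}\le C j^{\delta-3}$. The only cosmetic difference is that in the second half the paper keeps the original increments $(j-k)^{2-\delta}-(j-k-1)^{2-\delta}$ and telescopes them to $(j-\lceil j/2\rceil)^{2-\delta}$, whereas you first replace each increment by $C(j-k)^{1-\delta}$ via the mean value theorem and then sum; both routes land on the same $Cj^{-1}\le Cj^{1-\delta}$.
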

\begin{proof}
For $x\in \mathbb{R}$, let $\lceil x\rceil$ denote the smallest integer that is greater than or equal to $x$. By the mean value theorem applied to the function $x \mapsto x^{2-\delta}$ one gets
\begin{align*}
\sum_{k=1}^{\lceil j/2\rceil-1} \big[ (j-k)^{2-\delta} - (j-k-1)^{2-\delta} \big] k^{\delta-3}
    &\le \sum_{k=1}^{\lceil j/2\rceil-1} (2-\delta)(j-k-1)^{1-\delta} k^{\delta-3} \\
    &\le (2-\delta)(j-\lceil j/2\rceil)^{1-\delta} \sum_{k=1}^{\lceil j/2\rceil-1}  k^{\delta-3} \\
    &\le C (2-\delta)(j/2)^{1-\delta} \sum_{k=1}^\infty  k^{\delta-3} \\
    &\le Cj^{1-\delta},
\end{align*}
as $\delta <2$ implies that the infinite series is convergent. For the remaining terms in~\eqref{sum1}, by telescoping and $1<\delta<2$ we have
\begin{align*}
\sum_{k=\lceil j/2\rceil}^{j-1} \big[ (j-k)^{2-\delta} - (j-k-1)^{2-\delta} \big] k^{\delta-3}
    &\le \lceil j/2\rceil^{\delta-3} \sum_{k=\lceil j/2\rceil}^{j-1} \big[ (j-k)^{2-\delta} - (j-k-1)^{2-\delta} \big]  \\
    &= \lceil j/2\rceil^{\delta-3} \big(j-\lceil j/2\rceil\big)^{2-\delta} \\
    &\le (j/2)^{\delta-3}(j/2)^{2-\delta} \\
    &=2j^{-1}  \\
    &\le  2j^{1-\delta}.
\end{align*}

Adding these two bounds yields \eqref{sum1}.
\end{proof}

We can now bound the truncation errors of the finite difference scheme.
\begin{lemma}\label{lem:TruncError}
There exists a constant $C$ such that
the truncation errors in the discretization $A\vec u = \vec f$ of~\eqref{prob} satisfy
\begin{equation}\label{truncation}
|\tau_j| \le
\left\{ \begin{array}{ll}
C h^{\delta-1} & \hbox{if } j=0, \\
C & \hbox{if } j=1, \\
C (j-1)^{1-\delta} & \hbox{if } j=2,\ldots,N-1,\\
C \alpha_1 h & \hbox{if } j=N.
\end{array} \right.
\end{equation}
\end{lemma}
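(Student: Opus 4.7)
The four bounds come from three rather different sources, so I would treat them in turn, leaving the case $1\le j\le N-1$ for last because it is the only place where Lemma~\ref{lem:technical1} is needed.

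\textbf{Bound on $\tau_0$.} From \eqref{tau0} and a Taylor expansion with integral remainder,
$u(x_1)-u(x_0)=hu'(0)+\int_0^{h}(h-s)u''(s)\,ds$,
so $\tau_0=-(\alpha_0/h)\int_0^{h}(h-s)u''(s)\,ds$. The derivative bound $|u''(s)|\le Cs^{\delta-2}$ from Corollary~\ref{cor:PTintegerbounds} gives
$|\tau_0|\le (C\alpha_0/h)\int_0^{h}(h-s)s^{\delta-2}\,ds=C\alpha_0\,h^{\delta-1}\beta(2,\delta-1)$,
which is the desired $O(h^{\delta-1})$ estimate (the Taylor remainder is justified because $s^{\delta-2}$ is integrable on $[0,h]$).

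\textbf{Bound on $\tau_N$.} Since $x_{N-1},x_N\in[1/2,1]$ (for $N\ge 2$), one has $|u''(s)|\le C$ there. A standard Taylor argument at $x_N=1$ yields $|\tau_N|\le C\alpha_1 h\,\|u''\|_{L^\infty[1/2,1]}\le C\alpha_1 h$.

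\textbf{Bound on $\tau_j$ for $1\le j\le N-1$.} Split $\tau_j=T_j^{\mathrm{frac}}+T_j^{\mathrm{conv}}$, where $T_j^{\mathrm{frac}}$ is the error in replacing $D_\ast^\delta u(x_j)$ by the scheme \eqref{DdeltaDisc} and $T_j^{\mathrm{conv}}:=b_j\bigl(D^\aleph u(x_j)-u'(x_j)\bigr)$ (the $c_j u(x_j)$ term is reproduced exactly). Using \eqref{CaputoEquiv} and the identity $\frac{h^{2-\delta}}{2-\delta}\,d_{j-k}=\int_{x_k}^{x_{k+1}}(x_j-t)^{1-\delta}dt$, one rewrites
\[
T_j^{\mathrm{frac}}=\frac{1}{\Gamma(2-\delta)}\sum_{k=0}^{j-1}\int_{x_k}^{x_{k+1}}(x_j-t)^{1-\delta}e_k(t)\,dt,
\qquad e_k(t):=u''(t)-\frac{u(x_{k+2})-2u(x_{k+1})+u(x_k)}{h^2}.
\]
The plan is to estimate the $k=0$ term separately from the $k\ge 1$ terms.

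For $k=0$, $u''$ is singular, so a Taylor argument is not available; instead I would use the Peano representation $u(x_2)-2u(x_1)+u(x_0)=\int_0^{2h}\phi(t)u''(t)\,dt$ with $\phi(t)=h-|t-h|$. Inserting $|u''(t)|\le Ct^{\delta-2}$ gives $|u(x_2)-2u(x_1)+u(x_0)|\le Ch^{\delta}$, hence the constant value of the second difference is $O(h^{\delta-2})$, and combined with $|u''(t)|\le Ct^{\delta-2}$ this yields $|e_0(t)|\le Ct^{\delta-2}$ on $(0,h]$. For $j=1$ the single integral $\int_0^{h}(h-t)^{1-\delta}t^{\delta-2}dt=\beta(2-\delta,\delta-1)$ gives $|T_1^{\mathrm{frac}}|\le C$. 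For $j\ge 2$ one uses $(x_j-t)^{1-\delta}\le((j-1)h)^{1-\delta}$ on $[0,h]$ to obtain contribution $\le C(j-1)^{1-\delta}$.

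For $k\ge 1$, a standard Taylor expansion of the centred second difference gives $|e_k(t)|\le Ch\,|u'''(x_k)|+Ch^2|u^{(4)}(x_k)|\le Ch(kh)^{\delta-3}$, where I use Corollary~\ref{cor:PTintegerbounds} and the monotonicity of the power bounds. Integrating and summing,
\[
\Bigl|\sum_{k=1}^{j-1}\int_{x_k}^{x_{k+1}}(x_j-t)^{1-\delta}e_k(t)\,dt\Bigr|
\le C\sum_{k=1}^{j-1}k^{\delta-3}\bigl[(j-k)^{2-\delta}-(j-k-1)^{2-\delta}\bigr]
\le Cj^{1-\delta}\le C(j-1)^{1-\delta},
\]
by Lemma~\ref{lem:technical1} and $j\le 2(j-1)$ for $j\ge 2$.

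Finally, the upwind term $T_j^{\mathrm{conv}}$ satisfies $|T_j^{\mathrm{conv}}|\le Ch\max_{[x_{j-1},x_{j+1}]}|u''|$, which gives $|T_1^{\mathrm{conv}}|\le Ch^{\delta-1}$ (using $|u''(s)|\le Cs^{\delta-2}$ and integrating on $[0,h]$) and $|T_j^{\mathrm{conv}}|\le Ch^{\delta-1}(j-1)^{\delta-2}$ for $j\ge 2$, both dominated by the $T_j^{\mathrm{frac}}$ bound. Combining the estimates yields the claim.

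\textbf{Anticipated difficulty.} The delicate step is the $k=0$ contribution to $T_j^{\mathrm{frac}}$: the Taylor-based bound used for $k\ge 1$ fails because $u'',u''',u^{(4)}$ all blow up at the origin, so one must instead exploit the cancellation in the second difference via a Peano-kernel representation to keep $|e_0(t)|$ no worse than $t^{\delta-2}$ itself, and then verify that the singularity of $(x_j-t)^{1-\delta}$ is manageable for $j=1$ (beta-function integral) and tame for $j\ge 2$ (pull the weight out of the integral as $((j-1)h)^{1-\delta}$). All other pieces are routine once Lemma~\ref{lem:technical1} is in hand.
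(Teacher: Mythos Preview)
Your approach is essentially the paper's: the same decomposition $\tau_j = T_j^{\mathrm{frac}}+T_j^{\mathrm{conv}}$, the same isolation of the $k=0$ contribution from the $k\ge 1$ terms, and the same appeal to Lemma~\ref{lem:technical1} for the resulting sum. Your Peano-kernel treatment of the $k=0$ piece is a mild repackaging of the paper's argument, which instead splits $\tau_{j,0}$ by a triangle inequality into the integral $\int_0^h (x_j-s)^{1-\delta}u''(s)\,ds$ and the second-difference term and bounds each separately; both routes give the same $(j-1)^{1-\delta}$ outcome.

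One point needs adjustment. For $k\ge 1$ you bound $|e_k(t)|$ by a Taylor expansion that invokes $u^{(4)}(x_k)$, but under the standing hypothesis $q\ge 2$ of Corollary~\ref{cor:PTintegerbounds} only derivatives up to $u'''$ are controlled (the bound \eqref{ubound} runs to $i=q+1=3$). The paper avoids $u^{(4)}$ entirely: it writes the second difference as $h^2 u''(\eta_3)$ for some $\eta_3\in(x_k,x_{k+2})$ (mean value theorem for the Peano integral, valid since $u''\in C(0,1]$) and then uses $|u''(t)-u''(\eta_3)|=|t-\eta_3|\,|u'''(\eta_4)|\le Ch\,x_k^{\delta-3}$. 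With that replacement your plan goes through unchanged. A smaller stylistic point: for $j=1$ the bound $|T_1^{\mathrm{conv}}|\le Ch\max_{[x_0,x_2]}|u''|$ is vacuous since the maximum is infinite; you must pass directly to the integral form you mention parenthetically (the paper simply uses $u\in C^1[0,1]$ to get $|T_1^{\mathrm{conv}}|\le C$, which already suffices).
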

\begin{proof}
We shall consider separately the four cases  in \eqref{truncation}.

\emph{Case $j=0$:} By the mean value theorem, for some $\eta_1\in (0, h)$ we have
\begin{align*}
|\tau_0| &= \alpha_0\left| u'(0) -  \frac{u(h)-u(0)}{h} \right|
    = \alpha_0 | u'(0) -  u'(\eta_1)|
    = \alpha_0\left|\int_{t=0}^{\eta_1}u''(t)\,dt\right| \\
    &\le \alpha_0\int_{t=0}^h Ct^{\delta-2}\,dt
    \le C\alpha_0 h^{\delta-1},
\end{align*}
where we used Corollary~\ref{cor:PTintegerbounds}.

\emph{Case $j=N$:}  Similarly to the case $j=0$, one gets
$$
|\tau_N| \le  \alpha_1\int_{t=1-h}^1 Ct^{\delta-2}\,dt \le C\alpha_1 h.
$$

\emph{Case $1<j<N$:}  Fix $j\in \{2,3,\dots, N-1\}$. By virtue of~\eqref{tauj} and~\eqref{DdeltaDisc} we can write $\tau_j = R_j + \sum_{k=0}^{j-1}\tau_{j,k}$, where
$$
R_j= b_jD^\aleph u_j - (bu')(x_j)
$$
and
\begin{equation}\label{taujk2}
\tau_{j,k} = \frac1{\Gamma(2-\delta)}\int_{x_k}^{x_{k+1}} (x_j-s)^{1-\delta} u''(s)\,ds  - \frac{d_{j-k}}{\hdg} \left[ u(x_{k+2}) - 2u(x_{k+1}) + u(x_{k})\right].
\end{equation}
By Taylor expansions, for some constant $C$ one has
\begin{align*}
\vert R_j\vert & \le Ch \Vert b \Vert_\infty \max_{x\in[x_{j-1}, x_{j+1}]} |u''(x)|.
\end{align*}
Hence Corollary~\ref{cor:PTintegerbounds}, $hj \le 1$ and $\delta>1$ imply that
\begin{equation}\label{Rj}
\vert R_j\vert \le C h [h (j-1)]^{\delta-2} = C[h(j-1)]^{\delta-1}(j-1)^{-1}
    \le C(j-1)^{1-\delta} .
\end{equation}
Returning to the other component of $\tau_j$, we have
\begin{align*}
\tau_{j,k} &= \frac{u''(\eta_2)}{\Gamma(2-\delta)}\int_{x_k}^{x_{k+1}} (x_j-s)^{1-\delta} \,ds  - \frac{d_{j-k}}{\hdg} \left[ u(x_{k+2}) - 2u(x_{k+1}) + u(x_{k})\right]  \\
      &= \frac{h^{2-\delta}d_{j-k}}{\Gamma(3-\delta)}\left[ u''(\eta_2)  - \frac{u(x_{k+2}) - 2u(x_{k+1}) + u(x_k)}{h^2}  \right]
\end{align*}
for some $\eta_2\in (x_k, x_{k+1})$, by the mean value theorem for integrals.
Taylor expansions show that for some  $\eta_3\in (x_k, x_{k+2})$  and $k \ge 1$ one obtains
$$
\left| u''(\eta_2) - \frac{u(x_{k+2}) - 2u(x_{k+1}) + u(x_k)}{h^2}  \right|
    =  \left| u''(\eta_2) -  u''(\eta_3)\right|
    = \left| (\eta_2 -\eta_3)u'''(\eta_4)\right|
    \le Ch x_k^{\delta-3},
$$
where we again used the mean value theorem to get $\eta_4\in (x_k, x_{k+2})$, then the bound on~$u'''$ given by Corollary~\ref{cor:PTintegerbounds}. Consequently
\begin{equation}\label{taujk}
\vert \tau_{j,k} \vert  \le \frac{Ch^{3-\delta}d_{j-k} x_k^{\delta-3} }{\Gamma(3-\delta)} =  \frac{Cd_{j-k} k^{\delta-3} }{\Gamma(3-\delta)}\quad \text{for } k \ge 1.
\end{equation}
Now an appeal to Lemma~\ref{lem:technical1} gives
\begin{equation}\label{sumtaujk}
\sum_{k=1}^{j-1} \vert \tau_{j,k} \vert \le C j^{1-\delta}.
\end{equation}

To complete the case  $1<j<N$, it remains to bound $\tau_{j,0}$. By~\eqref{taujk2} and a triangle inequality one has
\begin{equation}\label{tauj0}
\vert \tau_{j,0} \vert \le \left|\frac1{\Gamma(2-\delta)}\int_{x_0}^{x_{1}} (x_j-s)^{1-\delta} u''(s)\,ds\right| + \left| \frac{d_{j}}{\hdg} \left[ u(x_{2}) - 2u(x_{1}) + u(x_{0})\right] \right|.
\end{equation}
We bound these two terms separately. First, by Corollary~\ref{cor:PTintegerbounds} and $j>1$ we get
\begin{align*}
\left|\frac1{\Gamma(2-\delta)}\int_{x_0}^{x_{1}} (x_j-s)^{1-\delta} u''(s)\,ds\right|
    &\le \frac{C[(j-1)h]^{1-\delta}}{\Gamma(2-\delta)}\int_{x_0}^{x_{1}}s^{\delta-2}\,ds
    = \frac{C(j-1)^{1-\delta}}{(\delta-1)\Gamma(2-\delta)}\,.
\end{align*}
For the second term in~\eqref{tauj0}, the mean value theorem and Corollary~\ref{cor:PTintegerbounds} give
\begin{align*}
\left| \frac{d_{j}}{\hdg} \left[ u(x_{2}) - 2u(x_{1}) + u(x_{0})\right] \right|
    &= \frac{d_{j}h\vert u'(\eta_6)-u'(\eta_5) \vert}{\hdg} \\
    &\le \frac{d_{j}h}{\hdg}\int^{\eta_6}_{t=\eta_5} |u''(t)|\, dt \\
    &\le \frac{Cd_{j}h}{\hdg}\int^{2h}_{t=0} t^{\delta-2}\, dt \\
    &= C d_{j}  \\
    &\le C (j-1)^{1-\delta},
\end{align*}
where $\eta_5\in (x_0,x_1)$ and $\eta_6\in (x_1,x_2)$.
Combining these inequalities with~\eqref{tauj0} yields
$$
\vert \tau_{j,0} \vert \le C (j-1)^{1-\delta}.
$$
Add this bound to~\eqref{Rj} and~\eqref{sumtaujk} to obtain finally
\begin{equation}\label{tauj3}
\vert \tau_j\vert \le \sum_{k=0}^{j-1} |\tau_{j,k}| + |R_j| \le C (j-1)^{1-\delta}
\quad  \text{for } 1<j<N.
\end{equation}

\emph{Case $j=1$:} This resembles the analysis above of $\tau_{j,0}$; one starts from~\eqref{tauj0} with~$j=1$ there. The only change is that now one invokes a standard bound on Euler's Beta function \citep[Theorem D.6]{Diet10} to see that
\begin{align*}
\left|\frac1{\Gamma(2-\delta)}\int_{x_0}^{x_{1}} (x_1-s)^{1-\delta} u''(s)\,ds \right|
    &\le \frac{C}{\Gamma(2-\delta)}\int_{x_0}^{x_{1}} (x_1-s)^{1-\delta} s^{\delta-2} \,ds \\
    &=\frac{C\,\Gamma(2-\delta)\Gamma(\delta-1)}{\Gamma(2-\delta)}\le C,
\end{align*}
while as before
$$
\left| \frac{d_1}{\hdg} \left[ u(x_{2}) - 2u(x_{1}) + u(x_{0})\right] \right| \le C d_1 = C.
$$
 By the mean value theorem, for some $\eta_{7}\in (x_0,x_2)$ one has
\begin{equation}\label{R1}
|R_1| = \left|b_1D^\aleph u_1 - (bu')(x_1)\right|
     \le \Vert b \Vert_\infty \, \vert u'(\eta_{7})-u'(x_1)\vert
     \le C
\end{equation}
for some constant $C$, because $u \in C^1[0,1]$. Combining the above bounds, we obtain
\begin{equation}\label{tau1}
\vert \tau_1 \vert \le \vert \tau_{1,0}\vert +\vert R_1\vert\le C.
\end{equation}

\end{proof}

\begin{remark}\label{rem:ShenLiu}
In \citet[Lemma~3]{SL0405} Taylor expansions are used to prove that $|\tau_j| \le Ch$ for $j=1,2,\dots, N-1$ under the implicit assumptions that $u'''$ and $u^{(4)}$ are bounded on~$[0,1]$. An inspection of the argument shows that it can be modified slightly to yield the same result under the assumption that $u'''$ is bounded on $[0,1]$; no assumption on $u^{(4)}$ is needed. Nevertheless the assumption that $u'''$ is bounded on $[0,1]$ is very strong and restricts the applicability of this result to special cases of~\eqref{prob} whose solutions are exceptionally smooth.
\end{remark}

We can now prove that our finite difference method is $O\left(h^{\delta-1}\right)$ accurate in the discrete maximum norm.
\begin{theorem}\label{thm:Error}
Let $b,c, f \in C^{q,\mu}(0,1]$ for some integer $q \ge 2$ and $\mu \le 2-\delta$. Assume that $c \ge 0,\ \alpha_1 \ge 0$ and the condition~\eqref{AlRcondition} is satisfied. Then the error in the discretization $A\vec u = \vec f$ of~\eqref{prob} satisfies
$$
\Vert u -\vec u \Vert_{\infty,d} \le C h^{\delta-1},
$$
where $\|u -\vec u\|_{\infty,d} := \max_{j=0,1,\dots,N}|u(x_j)-u_j|$.
\end{theorem}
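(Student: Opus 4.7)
The plan is to apply the discrete comparison principle afforded by the monotonicity of $A$ (Theorem~\ref{th:Amonotone}). Writing $\vec\tau := A(u - \vec u)$ and using $A^{-1}\ge 0$, it suffices to exhibit a non-negative vector $\vec w\in\mathbb{R}^{N+1}$ satisfying $A\vec w\ge|\vec\tau|$ componentwise with $\max_j w_j\le Ch^{\delta-1}$: then the identities $A\bigl(\vec w \mp (u - \vec u)\bigr) = A\vec w\pm\vec\tau\ge\vec 0$ combined with $A^{-1}\ge 0$ force $|u(x_j) - u_j|\le w_j\le Ch^{\delta-1}$ for every $j$, giving the claim.

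The truncation-error bounds of Lemma~\ref{lem:TruncError} are strongly inhomogeneous: $|\tau_0|\le Ch^{\delta-1}$ and $|\tau_N|\le Ch$ are already small, but in the interior $|\tau_j|\le C(j-1)^{1-\delta}$ (with $|\tau_1|\le C$) is only $O(1)$ at near-boundary indices, decreasing to $O(h^{\delta-1})$ only as $j$ approaches $N$. Since $\max_j w_j \le Ch^{\delta-1}$ is required, the barrier cannot be bounded below by $|\tau_j|$ pointwise; instead the rows of $A$ at those indices must amplify $\vec w$ by a factor of order $h^{1-\delta}$ to produce $(A\vec w)_j \ge |\tau_j|$. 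Such amplification is plausible because the diagonal entries $a_{jj}$ scale like $h^{-\delta}$ (see~\eqref{Hessenberg}), so that $a_{jj}w_j \sim h^{-1}$ dwarfs $|\tau_j|=O(1)$; the challenge is to prevent the negative off-diagonal terms from cancelling this positive contribution.

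Concretely, I would take $\vec w_j = Kh^{\delta-1}\Phi(x_j)$ with $K$ a large constant and $\Phi$ a positive bounded smooth function satisfying $(-D_\ast^\delta + b\partial + c)\Phi \ge \Gamma_0 > 0$ on $(0,1]$. A natural candidate is $\Phi(x) = M - x^\delta$ with $M$ sufficiently large, since $-D_\ast^\delta(M-x^\delta) = \Gamma(\delta+1) > 0$; if $b$ is sign-indefinite one can prepend an exponential weight $e^{\lambda(1-x)}$ as in standard convection-diffusion barrier arguments. Consistency of the discrete scheme yields $(A\vec w)_j \gtrsim Kh^{\delta-1}\Gamma_0$ at the bulk interior rows $j \ge j_0$ (with $j_0$ fixed, independent of $h$), which dominates $|\tau_j|\le C(j-1)^{1-\delta}$ for $K$ sufficiently large, while the boundary rows $j=0$ and $j=N$ are handled by direct computation from the first and last rows of $A$. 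The main obstacle is the finitely many exceptional rows $1 \le j < j_0$: here consistency of the discrete Caputo operator breaks down, $|\tau_j|=O(1)$ does not shrink with $h$, and one must work directly with the explicit Hessenberg entries~\eqref{Hessenberg} and the sign pattern established in Lemmas~\ref{missing1} and~\ref{lem:missing2} to show that $a_{jj}w_j \sim Kh^{-1}$ is not entirely cancelled by the negative off-diagonal contributions, possibly augmenting $\vec w$ by a small localized correction supported on these exceptional rows. Once this delicate balance is verified, the comparison principle yields $\|u - \vec u\|_{\infty,d}\le Ch^{\delta-1}$.
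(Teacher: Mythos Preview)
There is a genuine gap in the barrier argument. You claim that consistency yields $(A\vec w)_j \gtrsim K h^{\delta-1}\Gamma_0$ for $j \ge j_0$ with $j_0$ fixed, and that this dominates $|\tau_j| \le C(j-1)^{1-\delta}$ for $K$ large enough. But at any \emph{fixed} index~$j_0$, the right-hand side $C(j_0-1)^{1-\delta}$ is a constant independent of~$h$, whereas $K h^{\delta-1}\Gamma_0 \to 0$ as $h\to 0$; no fixed $K$ suffices. More generally, the set of rows where $|\tau_j|$ exceeds $Kh^{\delta-1}$ is $\{j : (j-1)^{1-\delta} > K h^{\delta-1}/C\}$, i.e.\ all $j$ with $j-1 \lesssim K^{-1/(\delta-1)}N$: this is a positive \emph{fraction} of the mesh, not finitely many indices. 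A barrier of size $O(h^{\delta-1})$ built from a function on which the scheme is consistent can only produce $(A\vec w)_j = O(h^{\delta-1})$, because by~\eqref{rowsum} the row sums of $A$ equal $c_j = O(1)$ and consistency means the off-diagonal cancellation is nearly complete. The ``localized correction'' you suggest would therefore have to live on $O(N)$ rows, and you give no indication of how to design it.

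The paper's proof works by a different mechanism that does not use a smooth barrier. It applies the row operations of Section~\ref{sec:monotoneA} (adding multiples of row~0 to eliminate column~0, yielding the M-matrix~$A'$) and then \emph{rescales} row~$j$ of the resulting system by $(1+\alpha_0 h^{-1})\, h^\delta\,\Gamma(3-\delta)/d_j$. The key observation is that after the row operations the $j^{\text{th}}$ row sum of $A'$ equals $-a_{j0}/(1+\alpha_0 h^{-1}) + c_j$, and by~\eqref{Hessenberga} this is essentially $d_j\, h^{1-\delta}/\alpha_0$, which has \emph{exactly the same} $j$-dependence (namely $d_j \sim (2-\delta)j^{1-\delta}$) as the truncation-error bound $|\tau_j|\le C(j-1)^{1-\delta}$ from Lemma~\ref{lem:TruncError}. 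Consequently the rescaling simultaneously makes every row sum $\ge 1$ (so $\|\tilde A^{-1}\|_\infty \le 1$) \emph{and} makes every rescaled truncation error $\tilde\tau_j = O(h^{\delta-1})$ uniformly in~$j$. This matching of the row-sum profile to the truncation-error profile---both inherited from the weight $d_j$ in the Caputo discretization---is the crux of the argument, and it is precisely what a barrier coming from a function with $L\Phi \ge \Gamma_0$ cannot reproduce.
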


\begin{proof}
Recalling \eqref{djk} and \eqref{Hessenberga}, the construction of Section~\ref{sec:monotoneA} added the multiple
$$
\frac{-a_{j0}}{1+\alpha_0 h^{-1}}
    = \frac1{1+\alpha_0 h^{-1}} \left[ \frac{d_j}{\hdg} +\epsilon_{j1}\frac{b_1+|b_1|}{2h} \right]
$$
of row $0$ of $A$ to row $j$ for $j=1,2,\dots, N-1$, yielding an M-matrix $A'$. When this construction is applied to the system of equations~\eqref{tau}, one modifies $\tau_j$ to
\begin{equation}\label{tau2}
\tau_j' := \tau_j + \frac{\tau_0}{1+\alpha_0 h^{-1}} \left[ \frac{d_j}{\hdg} +\epsilon_{j1}\frac{b_1+|b_1|}{2h} \right]  \quad\text{for } j=1,2,\dots, N-1,
\end{equation}
and then
\begin{equation}\label{tau3}
A'(u-\vec u) = \vec\tau', \ \text{\ with } \vec\tau' := (\tau_0\ \tau_1'\ \tau_2' \dots \tau_{N-1}'\  \tau_N)^T.
\end{equation}

For $j=1,2,\dots, N-1$, the proof of Theorem~\ref{th:Amonotone} shows that the $j^\text{th}$ row sum of~$A'$ is
\begin{equation}\label{jrowsum}
\left( \frac{\alpha_0 h^{-1}}{1+\alpha_0 h^{-1}} -1 \right)a_{j0}+c_j
    = \frac{-a_{j0}}{1+\alpha_0 h^{-1}}+c_j
    = \frac1{1+\alpha_0 h^{-1}} \left[ \frac{d_j}{\hdg} +\epsilon_{j1}\frac{b_1+|b_1|}{2h} \right]+c_j\,.
\end{equation}
Thus the value of the $j^\text{th}$  row sum depends strongly on $j$. We rescale rows $1,2,\dots, N-1$ in the system of equations~\eqref{tau3} by multiplying the $j^\text{th}$ equation by
$$
\frac{(1+\alpha_0 h^{-1})\hdg}{d_j}\,,
$$
so that by~\eqref{jrowsum}  each of these row sums is now $O(1)$ and equals
\begin{equation}\label{jrowsumtilde}
1+ \frac{\hdg}{d_j}\left[ \epsilon_{j1}\frac{b_1+|b_1|}{2h} + (1+\alpha_0 h^{-1})c_j \right]\ge 1.
\end{equation}
Write $\tilde A$ for the rescaled matrix of the system of equations and $\vec{\tilde \tau}$ for the rescaled right-hand side, so now we have
\begin{equation}\label{tau4}
\tilde A(u-\vec u) = \vec{\tilde\tau}, \ \text{\ with } \vec{\tilde\tau} := (\tau_0\ {\tilde\tau}_1\ \tilde{\tau}_2 \dots \tilde{\tau}_{N-1} \ \tau_N)^T
\end{equation}
and for $j=1,2,\dots, N-1$,
$$
\tilde{\tau}_j = \frac{(1+\alpha_0 h^{-1})\hdg}{d_j}\, \tau_j'
    = \frac{(1+\alpha_0 h^{-1})\hdg}{d_j}\,\tau_j + \left[1+ \epsilon_{j1}\frac{b_1+|b_1|}{2h}\cdot \frac{\hdg}{d_j} \right]\tau_0
$$
by~\eqref{tau2}. Hence, Lemma~\ref{lem:TruncError}  and  $d_j  \ge (2-\delta)j^{1-\delta}\ge (2-\delta)2^{1-\delta}(j-1)^{1-\delta}$ for $j\ge 2$ imply that
\begin{equation}\label{tildetau}
\vert \tilde{\tau}_j \vert \le C h^{\delta-1} \quad{\text{for }} j=1,2,\dots, N-1.
\end{equation}

But the off-diagonal entries of $\tilde A$ are non-positive and $\tilde A(1\ 1\dots 1)^T \ge (1\ 1\dots 1)^T > \vec 0$ by~\eqref{jrowsumtilde}, so  $\tilde A$ is an M-matrix; furthermore, it follows that in the standard matrix norm notation $\|\cdot\|_\infty$ one has $\|(\tilde A)^{-1}\|_\infty \le 1$ ---see, e.g., \citet[Lemma 2.1]{AK90}. Consequently~\eqref{tau4} and the bound~\eqref{tildetau}, together with $|\tau_0| \le Ch^{\delta-1}$ and $|\tau_N| \le C \alpha_1 h$ from Lemma~\ref{lem:TruncError},  imply that $\Vert u -\vec u \Vert_{\infty,d} \le C h^{\delta-1}$, as desired. \end{proof}

\begin{remark}\label{rem:comments_conditionN}
The order of convergence proved in  Theorem \ref{thm:Error} is the same order (in $\|\cdot\|_{\infty,d}$) as that proved in the norm $\|\cdot\|_\infty$ for the simplest collocation method ($m=1$) in \citet[Theorem 4.1]{PT12}
on a uniform mesh, but Theorem~\ref{thm:Error} places no condition on the mesh diameter, while the results of \cite{PT12} implicitly require $h^{\delta-1}$ to be smaller than some fixed constant---this may be restrictive when~$\delta$ is near~1.  This mesh condition arises because the proofs of the main convergence results of \cite{PT12} rely on the property that, for sufficiently large $N$, the operator $I-{\cal P}_N T$ is invertible and $\|(I-{\cal P}_N T)^{-1}\|$ is bounded in a certain  operator norm; to verify this property, the authors appeal to a standard argument from \cite[Lemma 3.2]{BPV01}, but on a uniform mesh this relies on inequality~(3.12) of \cite{BPV01} with $r=1,\ \nu =2-\delta$ and $m \ge 1$, and consequently ``for sufficiently large $N$" is equivalent to ``for $h^{\delta-1}$ sufficiently small". Note however that the mesh restriction is less demanding for the graded meshes that are also considered in \cite{PT12}.
\end{remark}

\section{Numerical results}\label{sec:numerical}
We first consider a problem whose solution $u$ lies in $C^1[0,1]\cap C^\infty(0,1]$, but $u\notin C^2[0,1]$ and the behaviour of $u$ mimics exactly the behaviour of the estimates of the solution in Corollary~\ref{cor:PTintegerbounds}; cf.~Example~\ref{exa:interpolationsharp}.

\emph{Test Problem 1.}
\begin{subequations}\label{test}
\begin{align}
-D_\ast^{\delta} u(x)+x^2u'(x)+(1+x)u(x) &= f(x) \ \text{ on }(0,1),  \\
u(0)-\ds\frac{1}{\delta-1} u'(0) &= \gamma_0 \ \text{ and } \ u(1)+u'(1) = \gamma_1,
\end{align}
\end{subequations}
where the function $f$ and the constants $\gamma_0$ and $\gamma_1$ are chosen such that the exact solution of~\eqref{test} is $u(x)=x^{\delta}+x^{2\delta-1}+1+3x-7x^2+4x^3+x^4$.


\begin{figure}[h]
\centering
\includegraphics[width=0.5\textwidth]{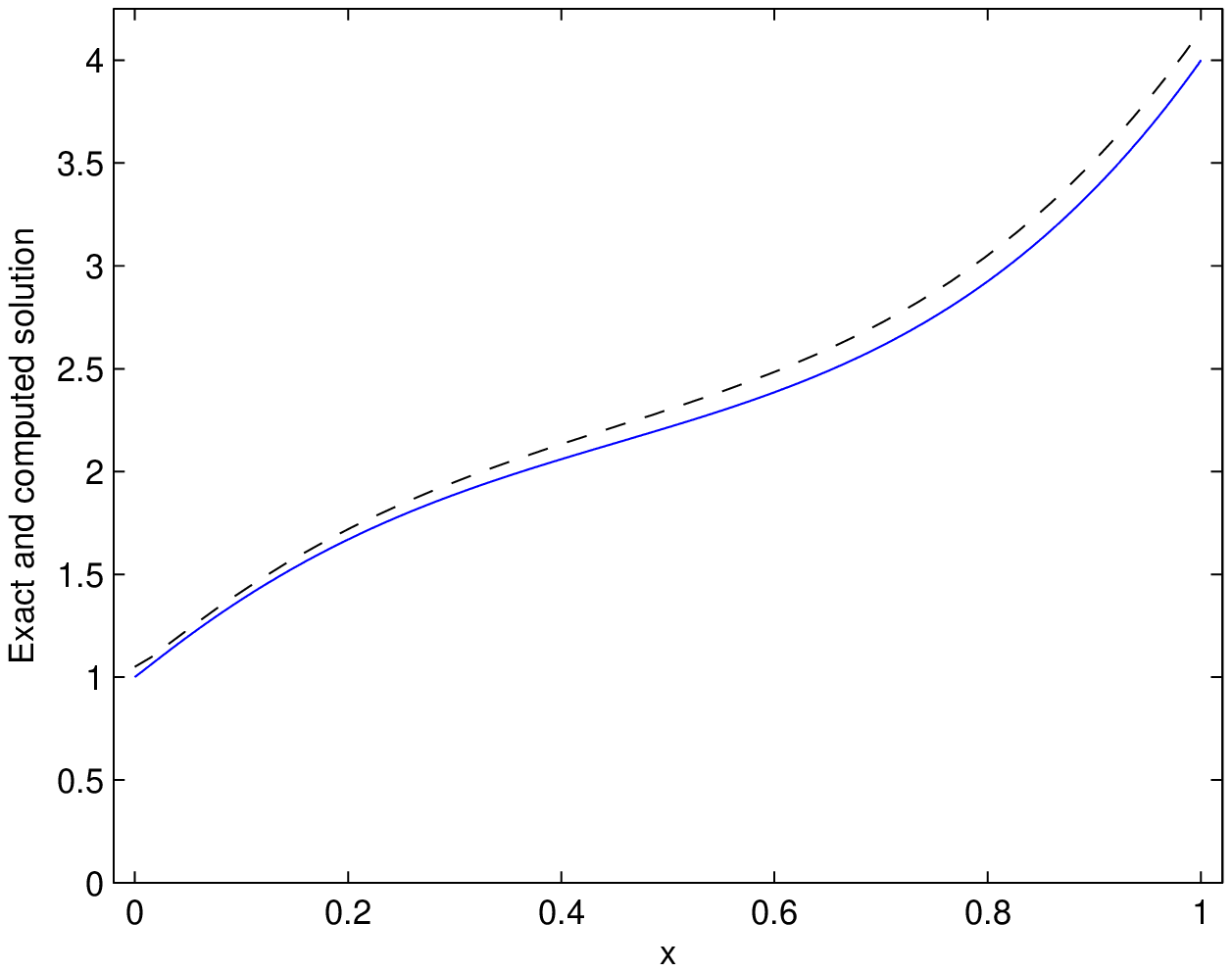}%
\includegraphics[width=0.5\textwidth]{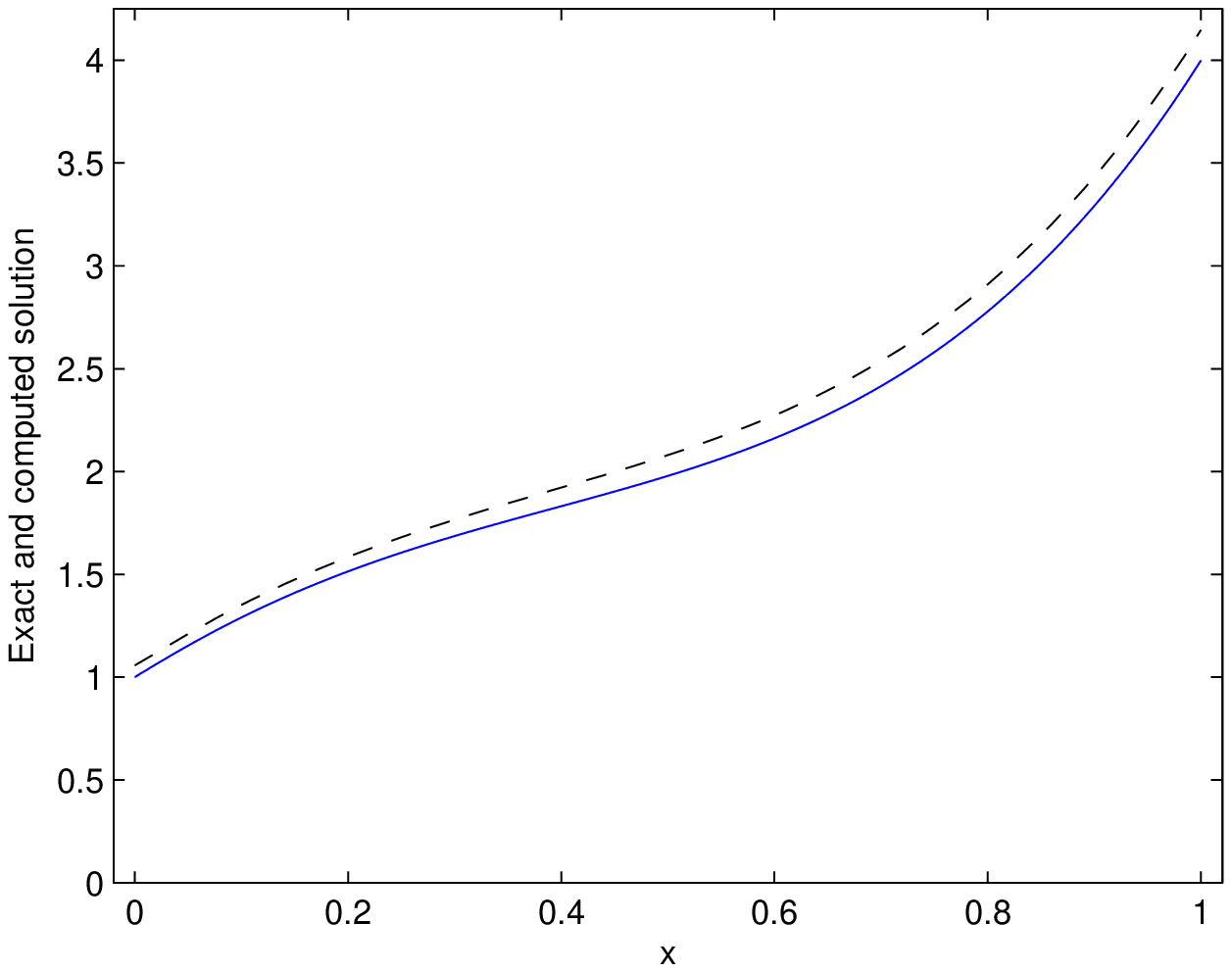}%
\caption{Exact (solid) and computed (dashed) solutions of (\ref{test}) for $N=64$ with $\delta=1.1$ (left figure) and $\delta=1.4$ (right figure)}
\label{fig1}
\end{figure}

\begin{figure}[h]
\centering
\includegraphics[width=0.5\textwidth]{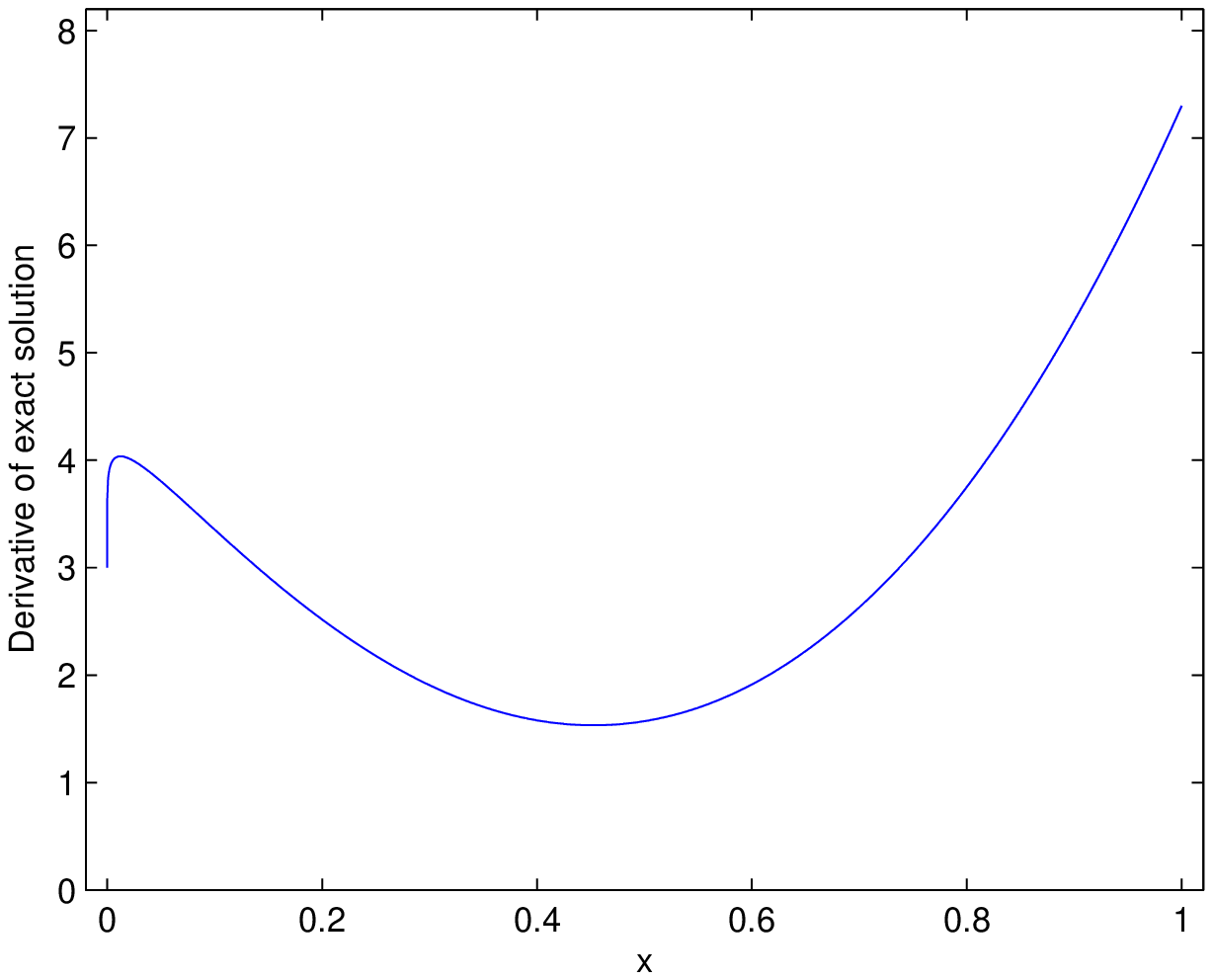}%
\includegraphics[width=0.5\textwidth]{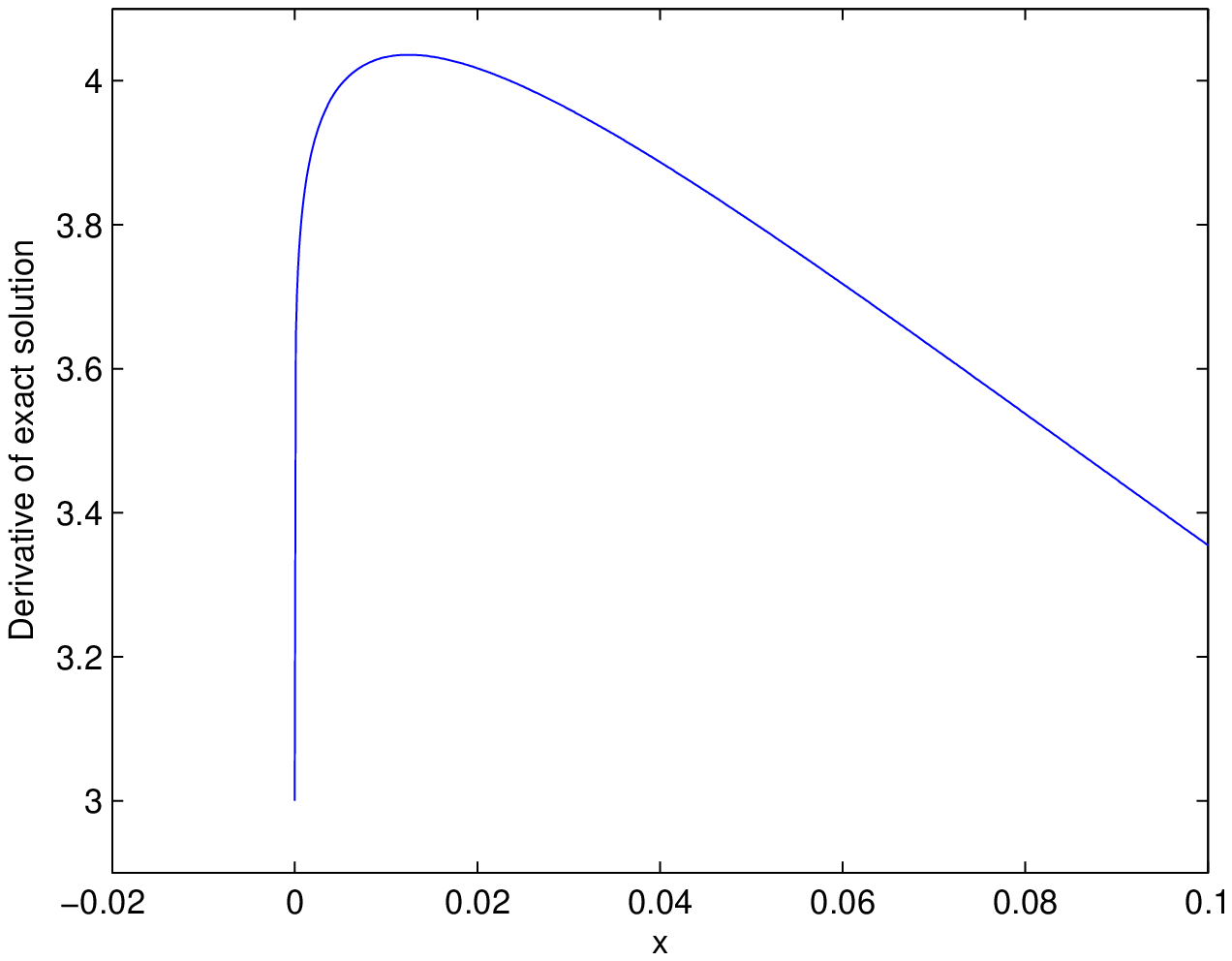}%
\caption{Derivative $u'(x)$ of the solution of (\ref{test}) with $\delta=1.1$, for $x \in [0,1]$ (left figure) and $x$ near 0 (right figure)}
\label{fig2b}
\end{figure}

\begin{figure}[h]
\centering
\includegraphics[width=0.5\textwidth]{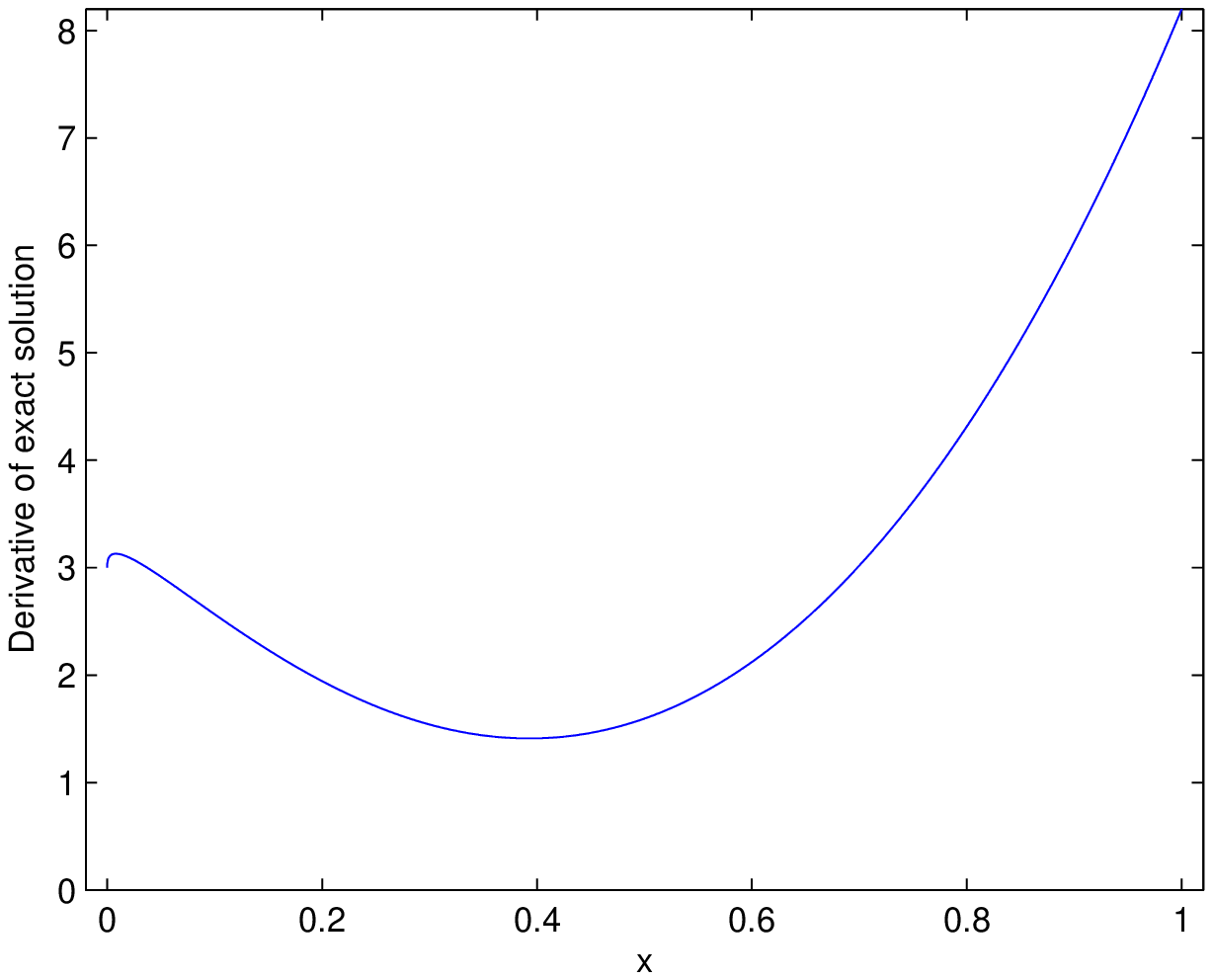}%
\includegraphics[width=0.5\textwidth]{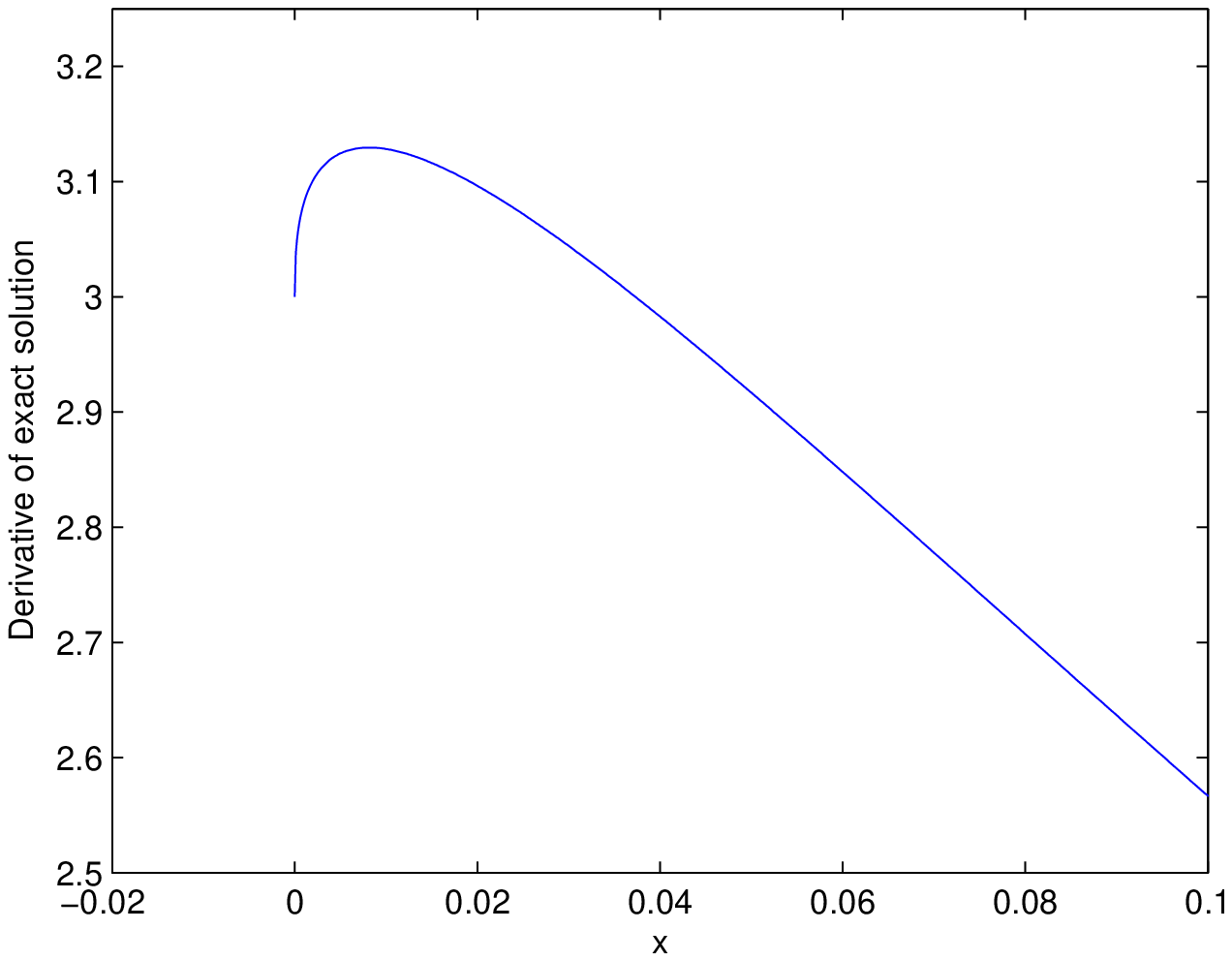}%
\caption{Derivative $u'(x)$ of the solution of (\ref{test}) with $\delta=1.4$, for $x \in [0,1]$ (left figure) and $x$ near 0 (right figure)}
\label{fig2}
\end{figure}

The numerical solution $\{u_j\}_{j=0}^N$ of problem (\ref{test}) is computed on a uniform mesh of width $h=1/N$ as described in Section~\ref{sec:propertiesA}.
Figure~\ref{fig1} shows the exact solution $u$ for~$\delta=1.1$ (left figure) and~$\delta=1.4$ (right figure), together with the respective solutions computed by our finite difference method for $N=64$. Figures~\ref{fig2b} (left) and \ref{fig2} (left) show the derivative $u'$ of the exact solution for $\delta=1.1$ and $\delta=1.4$, respectively. We observe that the solution $u$ and its derivative $u'$ are bounded functions. In Figures~\ref{fig2b} (right) and \ref{fig2} (right) a zoom of $u'(x)$ in the vicinity of $x=0$ is displayed, and we can observe  a vertical tangent at this point in both cases.

Table \ref{tb1} presents numerical results for several values of the parameters~$\delta$ and~$N$.
Each table entry shows first the maximum pointwise error
$$
e_N^\delta := \max_{0\le j\le N} \vert u(x_j) - u_j\vert,
$$
and then the order of convergence, which is computed in the standard way:
$$
p_N^\delta = \log _2 \left(\frac{e_N^\delta}{e_{2N}^\delta}\right).
$$
To show that the numerical results do not depend strongly on the value of $\delta$, we have also computed the uniform errors for the set of values of the parameter $\delta$ considered in Table \ref{tb1} and the corresponding orders of convergence. These values are defined for each $N$  by
$$
e_N=\max_\delta\, e_N^\delta \quad \hbox{and} \quad p_N=\log _2 \left(\frac{e_{N}}{e_{2N}}\right)
$$
and they appear in the last row of Table \ref{tb1}.

These numerical results show that one obtains first-order convergence for each value of $\delta$ considered in Table~\ref{tb1}, and this convergence is  uniform in $\delta$.
Figure~\ref{fig3} exhibits the pointwise errors in the computed solution for $\delta=1.1, 1.4$ and $N=64, 128$, to show how the error varies within $[0,1]$.

The first-order convergence of Table~\ref{tb1} is much better than the $O(h^{\delta-1})$ convergence guaranteed by Theorem~\ref{thm:Error}, but our second numerical example will show that the rate of convergence can indeed deteriorate when $\delta$ is close to $1$.

\begin{table}[h]
\caption{Test Problem (\ref{test}): Maximum and uniform errors $e_N^\delta, \, e_N$  and their orders of convergence $p_N^\delta, \, p_N$}
\begin{center}{\scriptsize \label{tb1}
\begin{tabular}{|c||c|c|c|c|c|c|}
 \hline    & N=64 & N=128 & N=256 & N=512 & N=1024 & N=2048 \\
\hline \hline $\delta=1.1$
&1.464E-001 &7.547E-002 &3.843E-002 &1.941E-002 &9.761E-003 &4.895E-003 \\
&0.956&0.974&0.985&0.992&0.996&0.998
\\ \hline $\delta=1.2$
&1.455E-001 &7.443E-002 &3.769E-002 &1.897E-002 &9.522E-003 &4.770E-003 \\
&0.967&0.982&0.990&0.995&0.997&0.999
\\ \hline $\delta=1.3$
&1.457E-001 &7.427E-002 &3.754E-002 &1.888E-002 &9.472E-003 &4.744E-003 \\
&0.972&0.984&0.991&0.995&0.997&0.999
\\ \hline $\delta=1.4$
&1.466E-001 &7.469E-002 &3.776E-002 &1.900E-002 &9.536E-003 &4.779E-003 \\
&0.973&0.984&0.991&0.995&0.997&0.998
\\ \hline $\delta=1.5$
&1.476E-001 &7.534E-002 &3.816E-002 &1.924E-002 &9.669E-003 &4.852E-003 \\
&0.971&0.981&0.988&0.992&0.995&0.997
\\ \hline $\delta=1.6$
&1.479E-001 &7.573E-002 &3.849E-002 &1.947E-002 &9.816E-003 &4.937E-003 \\
&0.965&0.976&0.983&0.988&0.991&0.994
\\ \hline $\delta=1.7$
&1.459E-001 &7.508E-002 &3.836E-002 &1.950E-002 &9.876E-003 &4.988E-003 \\
&0.958&0.969&0.976&0.981&0.985&0.988
\\ \hline $\delta=1.8$
&1.392E-001 &7.197E-002 &3.698E-002 &1.891E-002 &9.637E-003 &4.896E-003 \\
&0.951&0.961&0.967&0.973&0.977&0.980
\\ \hline $\delta=1.9$
&1.236E-001 &6.389E-002 &3.288E-002 &1.686E-002 &8.628E-003 &4.405E-003 \\
&0.952&0.958&0.963&0.967&0.970&0.973
\\ \hline $e_N$
&1.479E-001 &7.573E-002 &3.849E-002 &1.950E-002 &9.876E-003 &4.988E-003 \\
$p_N$ &0.965&0.976&0.981&0.981&0.985&0.988\\ \hline \hline
\end{tabular}}
\end{center}
\end{table}

\begin{figure}[hta!]
\centering
\includegraphics[width=0.5\textwidth]{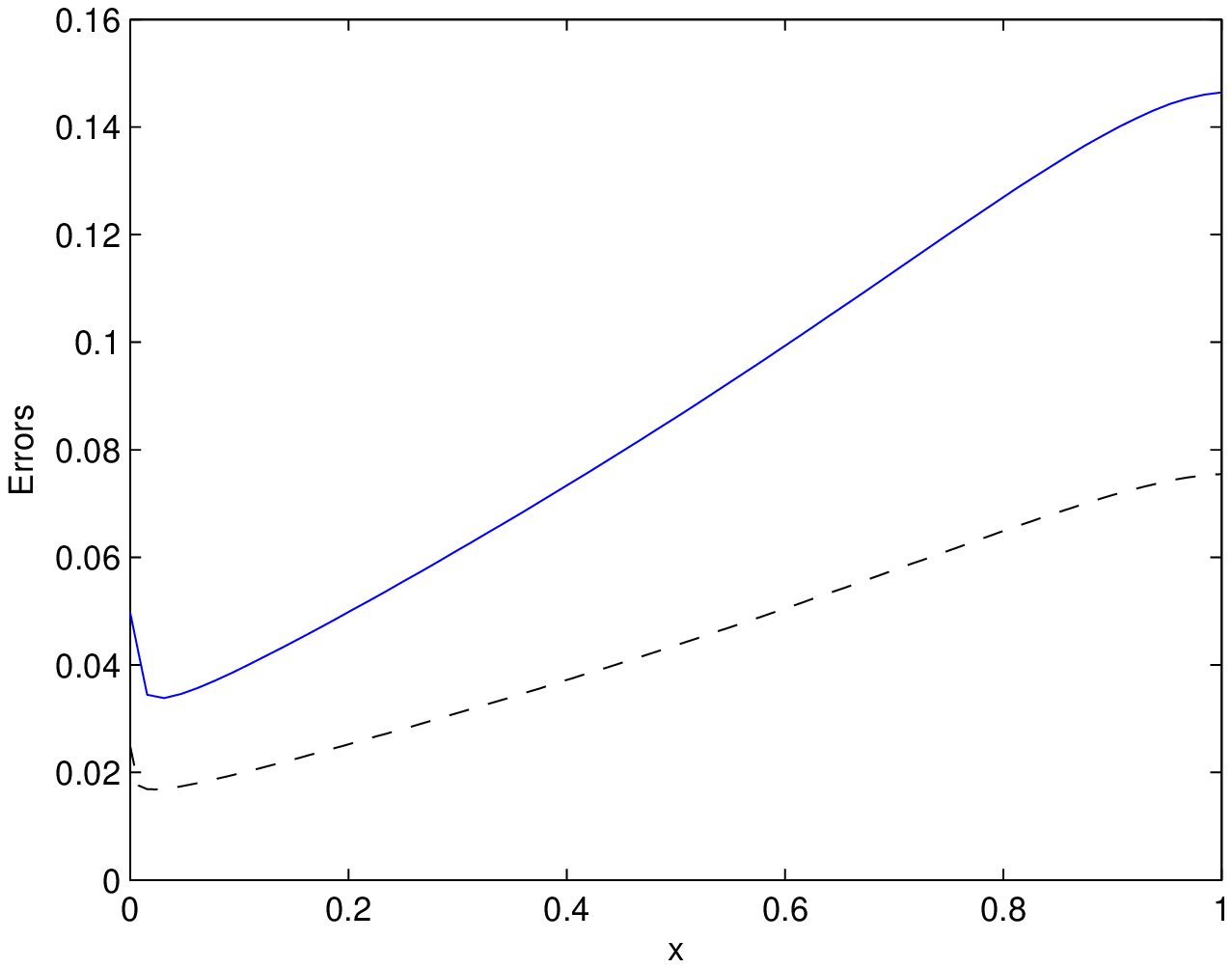}%
\includegraphics[width=0.5\textwidth]{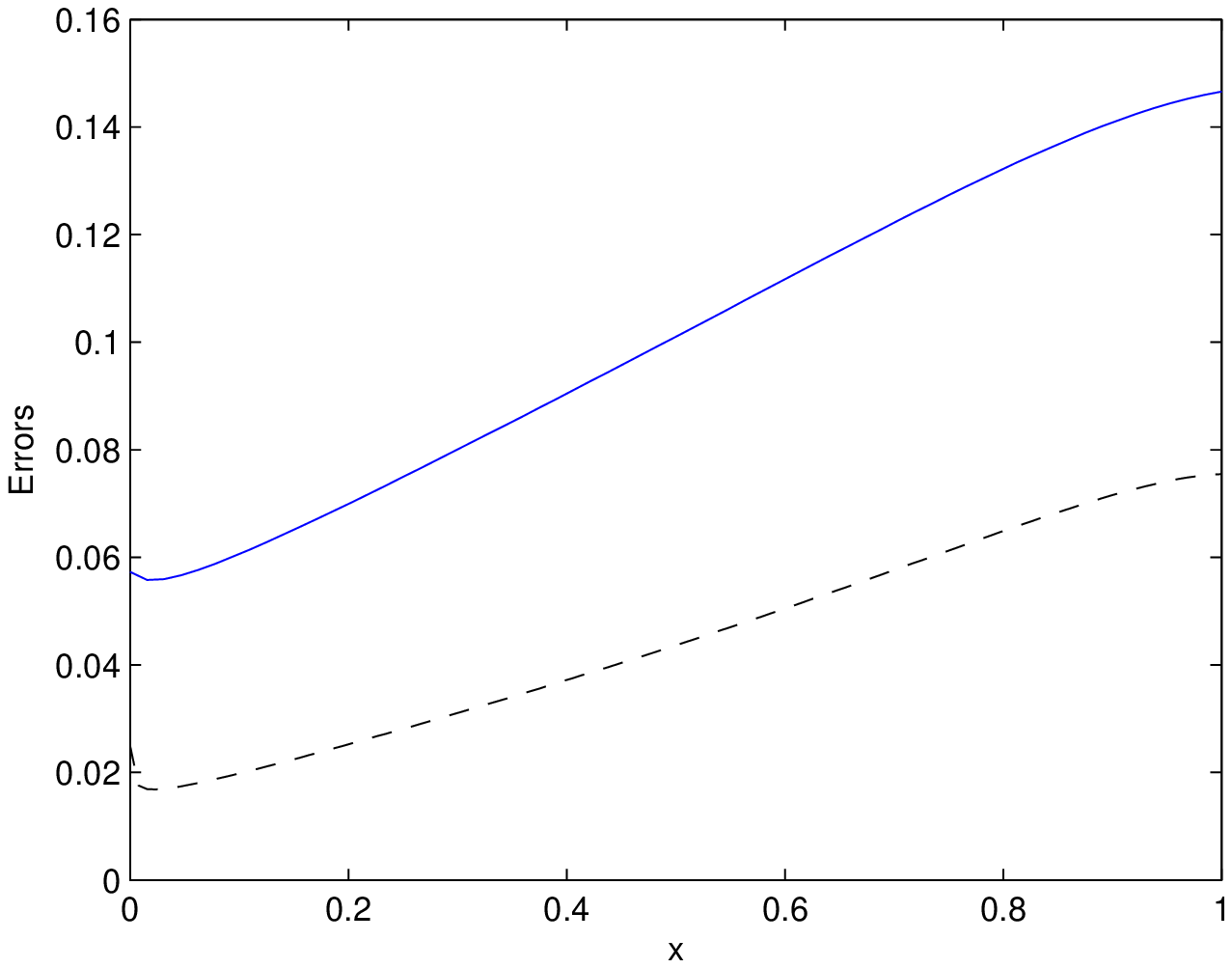}%
\caption{Pointwise errors in computed solutions for test problem (\ref{test}) with $\delta=1.1$ (left figure) and $\delta=1.4$ (right figure). The values of the discretization parameters are $N=64$ (solid  line) and $N=128$ (dashed line).}
\label{fig3}
\end{figure}

\newpage
\emph{Test Problem 2.}

Consider the constant-coefficient problem
\begin{subequations}\label{test2}
\begin{align}
-D_\ast^{\delta} u(x)+ 2 u'(x)+3u(x) &= 1.25 \ \text{ on }(0,1),  \\
u(0)-\ds\frac{1}{\delta-1} u'(0) &= 0.4 \ \text{ and } \ u(1)= 1.7.
\end{align}
\end{subequations}
As the exact solution of \eqref{test2} is unknown, in order to estimate the errors  of the solution~$\{u_j\}^N_{j=0}$ computed on a uniform mesh of width $h=1/N$ by our finite difference method, we use the two-mesh principle \cite[Section 5.6]{FHMORS00}: on a uniform mesh of width $h/2$, compute the numerical solution~$\{z_j\}^{2N}_{j=0}$ with the same method and hence the two-mesh differences
$$
d_N^\delta := \max_{0\le j\le N} \vert u_j - z_{2j}\vert;
$$
from these values we estimate the orders of convergence by
$$
q_N^\delta = \log _2 \left(\frac{d_N^\delta}{d_{2N}^\delta}\right).
$$
The uniform two-mesh differences and their corresponding uniform orders of convergence are computed analogously to Table~\ref{tb1} and denoted by~$d_N$ and~$q_N$, respectively. The numerical results obtained are displayed in Table~\ref{tb2} and we observe that the finite difference method is again convergent but a significant decrease  in the order of convergence occurs for values of~$\delta$ close to~1 and practical values of~$N$.

\begin{table}[h]
\caption{Test Problem (\ref{test2}): Maximum and uniform two-mesh differences $d_N^\delta, \, d_N$  and their orders of convergence $q_N^\delta, \, q_N$}
\begin{center}{\scriptsize \label{tb2}
\begin{tabular}{|c||c|c|c|c|c|c|}
 \hline    & N=64 & N=128 & N=256 & N=512 & N=1024 & N=2048 \\
\hline \hline $\delta=1.1$
&2.304E-001 &2.277E-001 &2.200E-001 &2.075E-001 &1.913E-001 &1.702E-001 \\
&0.017&0.050&0.084&0.117&0.168&0.255
\\ \hline $\delta=1.2$
&1.289E-001 &1.032E-001 &7.512E-002 &4.919E-002 &2.935E-002 &1.631E-002 \\
&0.321&0.458&0.611&0.745&0.847&0.915
\\ \hline $\delta=1.3$
&6.557E-002 &4.240E-002 &2.509E-002 &1.387E-002 &7.337E-003 &3.782E-003 \\
&0.629&0.757&0.855&0.919&0.956&0.977
\\ \hline $\delta=1.4$
&3.635E-002 &2.125E-002 &1.167E-002 &6.158E-003 &3.174E-003 &1.614E-003 \\
&0.775&0.864&0.922&0.956&0.976&0.986
\\ \hline $\delta=1.5$
&2.271E-002 &1.265E-002 &6.749E-003 &3.509E-003 &1.795E-003 &9.104E-004 \\
&0.844&0.906&0.944&0.967&0.980&0.988
\\ \hline $\delta=1.6$
&1.548E-002 &8.418E-003 &4.443E-003 &2.300E-003 &1.177E-003 &5.976E-004 \\
&0.879&0.922&0.950&0.967&0.978&0.985
\\ \hline $\delta=1.7$
&1.110E-002 &5.968E-003 &3.140E-003 &1.628E-003 &8.356E-004 &4.262E-004 \\
&0.895&0.927&0.948&0.962&0.971&0.978
\\ \hline $\delta=1.8$
&8.063E-003 &4.305E-003 &2.264E-003 &1.178E-003 &6.078E-004 &3.120E-004 \\
&0.905&0.927&0.943&0.954&0.962&0.969
\\ \hline $\delta=1.9$
&5.650E-003 &2.978E-003 &1.557E-003 &8.090E-004 &4.184E-004 &2.156E-004 \\
&0.924&0.936&0.945&0.951&0.957&0.961
\\ \hline $d_N$
&2.304E-001 &2.277E-001 &2.200E-001 &2.075E-001 &1.913E-001 &1.702E-001 \\
$q_N$ &0.017&0.050&0.084&0.117&0.168&0.255\\ \hline \hline
\end{tabular}}
\end{center}
\end{table}

\section{Conclusions}\label{sec:conclusions}
In this paper we discussed a two-point boundary value problem whose highest-order derivative was a Caputo fractional derivative of order $\delta \in (1,2)$. A comparison principle was proved for this differential operator on its domain $[0,1]$ provided that the boundary conditions satisfied certain restrictions. Then we derived sharp a priori bounds on the integer-order derivatives of the solution $u$ of the boundary value problem, using elementary analytical techniques to extract this information from previously-known bounds on the integer-order derivatives of $D^\delta_\ast u$. These new bounds were used to analyse a finite difference scheme for this problem via a truncation error analysis, but this analysis was complicated by the awkward fact that $u''(x)$ and $u'''(x)$  blow up at the boundary $x=0$ of the domain $[0,1]$. We were able to prove that our finite difference method was $O(h^{\delta-1})$ accurate at the nodes of our mesh ($h$ is the mesh width and the mesh is uniform), but our numerical experience has been that the method is  often more accurate; we have observed first-order convergence for all values of $\delta$ in several numerical examples, though one can have some deterioration in the rate of convergence when $\delta$ is near~1, as we saw in Test Problem 2.

In future work in~\cite{future_paper_GS} and other papers we shall discuss the use of alternative difference approximations of the convective term of~\eqref{proba}, investigate why the rate of convergence of~\eqref{fulldisc} is sometimes first order for all values of $\delta$, and  extend our approach to higher-order difference schemes and to graded meshes (cf.~\citet{PT12}).

\bibliographystyle{plainnat}
\bibliography{Caputo2ptbvp}

\def\cprime{$'$}
\begin{thebibliography}{19}
\providecommand{\natexlab}[1]{#1}
\providecommand{\url}[1]{\texttt{#1}}
\expandafter\ifx\csname urlstyle\endcsname\relax
  \providecommand{\doi}[1]{doi: #1}\else
  \providecommand{\doi}{doi: \begingroup \urlstyle{rm}\Url}\fi

\bibitem[Al-Refai(2012{\natexlab{a}})]{AlR12a}
Mohammed Al-Refai.
\newblock Basic results on nonlinear eigenvalue problems of fractional order.
\newblock \emph{Electron. J. Differ. Equ.}, 2012\penalty0 (191):\penalty0
  1--12, 2012{\natexlab{a}}.

\bibitem[Al-Refai(2012{\natexlab{b}})]{AlR12b}
Mohammed Al-Refai.
\newblock On the fractional derivatives at extreme points.
\newblock \emph{Electron. J. Qual. Theory Differ. Equ.}, 2012\penalty0
  (55):\penalty0 1--5, 2012{\natexlab{b}}.
\newblock ISSN 1417-3875.

\bibitem[Axelsson and Kolotilina(1990)]{AK90}
O.~Axelsson and L.~Kolotilina.
\newblock Monotonicity and discretization error estimates.
\newblock \emph{SIAM J. Numer. Anal.}, 27\penalty0 (6):\penalty0 1591--1611,
  1990.
\newblock ISSN 0036-1429.
\newblock \doi{10.1137/0727093}.

\bibitem[Brunner et~al.(2001)Brunner, Pedas, and Vainikko]{BPV01}
Hermann Brunner, Arvet Pedas, and Gennadi Vainikko.
\newblock Piecewise polynomial collocation methods for linear {V}olterra
  integro-differential equations with weakly singular kernels.
\newblock \emph{SIAM J. Numer. Anal.}, 39\penalty0 (3):\penalty0 957--982
  (electronic), 2001.
\newblock ISSN 0036-1429.
\newblock \doi{10.1137/S0036142900376560}.
\newblock URL \url{http://dx.doi.org/10.1137/S0036142900376560}.

\bibitem[Diethelm(2010)]{Diet10}
Kai Diethelm.
\newblock \emph{The analysis of fractional differential equations}, volume 2004
  of \emph{Lecture Notes in Mathematics}.
\newblock Springer-Verlag, Berlin, 2010.
\newblock ISBN 978-3-642-14573-5.
\newblock An application-oriented exposition using differential operators of
  Caputo type.

\bibitem[Farrell et~al.(2000)Farrell, Hegarty, Miller, O'Riordan, and
  Shishkin]{FHMORS00}
P.~A. Farrell, A.~F. Hegarty, J.~J.~H. Miller, E.~O'Riordan, and G.~I.
  Shishkin.
\newblock \emph{Robust computational techniques for boundary layers}, volume~16
  of \emph{Applied Mathematics (Boca Raton)}.
\newblock Chapman \& Hall/CRC, Boca Raton, FL, 2000.
\newblock ISBN 1-58488-192-5.

\bibitem[Fiedler(1986)]{Fi86}
Miroslav Fiedler.
\newblock \emph{Special matrices and their applications in numerical
  mathematics}.
\newblock Martinus Nijhoff Publishers, Dordrecht, 1986.
\newblock ISBN 90-247-2957-2.
\newblock \doi{10.1007/978-94-009-4335-3}.
\newblock Translated from the Czech by Petr P{\v{r}}ikryl and Karel Segeth.

\bibitem[Gracia and Stynes(2013)]{future_paper_GS}
José~Luis Gracia and Martin Stynes.
\newblock Upwind and central difference approximation of convection in {C}aputo
  fractional derivative two-point boundary value problems.
\newblock \emph{(In preparation)}, 2013.

\bibitem[Ji and Tang(2012)]{JT12}
Xia Ji and Huazhong Tang.
\newblock High-order accurate {R}unge-{K}utta (local) discontinuous {G}alerkin
  methods for one- and two-dimensional fractional diffusion equations.
\newblock \emph{Numer. Math. Theory Methods Appl.}, 5\penalty0 (3):\penalty0
  333--358, 2012.
\newblock ISSN 1004-8979.
\newblock \doi{10.4208/nmtma.2012.m1107}.

\bibitem[Machado et~al.(2011)Machado, Kiryakova, and Mainardi]{MKM11}
J.~Tenreiro Machado, Virginia Kiryakova, and Francesco Mainardi.
\newblock Recent history of fractional calculus.
\newblock \emph{Commun. Nonlinear Sci. Numer. Simul.}, 16\penalty0
  (3):\penalty0 1140--1153, 2011.
\newblock ISSN 1007-5704.
\newblock \doi{10.1016/j.cnsns.2010.05.027}.

\bibitem[Mustapha and McLean(2012)]{MM12}
Kassem Mustapha and William McLean.
\newblock Uniform convergence for a discontinuous {G}alerkin, time-stepping
  method applied to a fractional diffusion equation.
\newblock \emph{IMA J. Numer. Anal.}, 32\penalty0 (3):\penalty0 906--925, 2012.
\newblock ISSN 0272-4979.
\newblock \doi{10.1093/imanum/drr027}.

\bibitem[Pedas and Tamme(2012)]{PT12}
Arvet Pedas and Enn Tamme.
\newblock Piecewise polynomial collocation for linear boundary value problems
  of fractional differential equations.
\newblock \emph{J. Comput. Appl. Math.}, 236\penalty0 (13):\penalty0
  3349--3359, 2012.
\newblock ISSN 0377-0427.
\newblock \doi{10.1016/j.cam.2012.03.002}.
\newblock URL \url{http://dx.doi.org/10.1016/j.cam.2012.03.002}.

\bibitem[Roos et~al.(2008)Roos, Stynes, and Tobiska]{RST08}
Hans-G{\"o}rg Roos, Martin Stynes, and Lutz Tobiska.
\newblock \emph{Robust numerical methods for singularly perturbed differential
  equations}, volume~24 of \emph{Springer Series in Computational Mathematics}.
\newblock Springer-Verlag, Berlin, second edition, 2008.
\newblock ISBN 978-3-540-34466-7.
\newblock Convection-diffusion-reaction and flow problems.

\bibitem[Saadatmandi and Dehghan(2011)]{SD11}
Abbas Saadatmandi and Mehdi Dehghan.
\newblock A tau approach for solution of the space fractional diffusion
  equation.
\newblock \emph{Comput. Math. Appl.}, 62\penalty0 (3):\penalty0 1135--1142,
  2011.
\newblock ISSN 0898-1221.
\newblock \doi{10.1016/j.camwa.2011.04.014}.

\bibitem[Samko et~al.(1993)Samko, Kilbas, and Marichev]{SKR93}
Stefan~G. Samko, Anatoly~A. Kilbas, and Oleg~I. Marichev.
\newblock \emph{Fractional integrals and derivatives}.
\newblock Gordon and Breach Science Publishers, Yverdon, 1993.
\newblock ISBN 2-88124-864-0.
\newblock Theory and applications, Edited and with a foreword by S. M.
  Nikol{\cprime}ski{\u\i}, Translated from the 1987 Russian original, Revised
  by the authors.

\bibitem[Shen and Liu(2004/05)]{SL0405}
S.~Shen and F.~Liu.
\newblock Error analysis of an explicit finite difference approximation for the
  space fractional diffusion equation with insulated ends.
\newblock \emph{ANZIAM J.}, 46\penalty0 ((C)):\penalty0 C871--C887, 2004/05.
\newblock ISSN 1446-1811.

\bibitem[Sousa(2012)]{So12}
Ercilia Sousa.
\newblock How to approximate the fractional derivative of order
  {$1<\alpha\leq2$}.
\newblock \emph{Internat. J. Bifur. Chaos Appl. Sci. Engrg.}, 22\penalty0
  (4):\penalty0 1250075, 13, 2012.
\newblock ISSN 0218-1274.
\newblock \doi{10.1142/S0218127412500757}.

\bibitem[Talvila(2001)]{Ta01}
Erik Talvila.
\newblock Necessary and sufficient conditions for differentiating under the
  integral sign.
\newblock \emph{Amer. Math. Monthly}, 108\penalty0 (6):\penalty0 544--548,
  2001.
\newblock ISSN 0002-9890.
\newblock \doi{10.2307/2695709}.

\bibitem[Zheng et~al.(2010)Zheng, Li, and Zhao]{ZLZ10}
Yunying Zheng, Changpin Li, and Zhengang Zhao.
\newblock A note on the finite element method for the space-fractional
  advection diffusion equation.
\newblock \emph{Comput. Math. Appl.}, 59\penalty0 (5):\penalty0 1718--1726,
  2010.
\newblock \doi{10.1016/j.camwa.2009.08.071}.

\end{thebibliography}

\end{document}